\documentclass[12pt]{amsproc}

\usepackage{fancyhdr}

\author[Güloğlu]{İSMAİL Ş. GÜLOĞLU}
\address{Department of Mathematics, Doğuş University, Istanbul, Turkey}
\author[Ercan]{GÜLİN  ERCAN*}
\address{Department of Mathematics, Middle East Technical University, 06800, Ankara/Turkey}
\email{iguloglu@dogus.edu.tr}
\email{ercan@metu.edu.tr}
\thanks{$^{*}$Corresponding author}	
\newtheorem{theo}{\sc Theorem}[section]
\newtheorem{lem}[theo]{\sc Lemma}
\newtheorem{proposition}[theo]{\sc Proposition}

\newtheorem{definition}[theo]{\sc Definition}

\keywords{Fixing size, automorphism, Fitting height}
\subjclass{20D45}

\title{Fixing Size and Fitting height}

\begin{document}

\maketitle

\begin{abstract}

Let $G$ be a finite solvable group on which a nilpotent group $A$ acts by automorphisms. The fixing size $\mathbf{c}(G;A)$ of $A$ on $G$ is the number of $A$-composition factors on which $A$ acts trivially in an $A$-composition series of $G$. In this paper we obtain a linear bound for the Fitting height of $G$ in terms of $\mathbf{c}(G;A)$ and $\ell(A)$ where $\ell(A)$ denotes the number of prime divisors (counted with multiplicities) of $A$, under some additional hypotheses.   
\end{abstract}

\maketitle
\section{Introduction}

Throughout this paper, all groups are finite. The current study presents a modest contribution to the investigations dedicated to establishing the following well-known conjecture.

\textbf{Conjecture.} \textit{Let $A$ be a group acting fixed point freely on the group $G$.
The Fitting height $h(G)$ of $G$ is bounded above by $\ell(A)$,that is, the number of prime divisors of $A$ (counted with multiplicities) if the orders of $G$ and $A$ are coprime or if $A$ is nilpotent.}

Substantial progress has been achieved in the coprime case, that is the case where $(\left\vert G\right\vert ,\left\vert A\right\vert )=1$. This condition yields the following properties, which significantly facilitate the analysis of the structure of $G.$
\begin{enumerate}
\item \textit{$G=[G,A]C_{G}(A),$} 

\item \textit{$[G,A]=[G,A,A],$}

\item \textit{$%
C_{G/N}(A)=C_{G}(A)N/N,$ for any $A$-invariant normal subgroup $N$ of $G$}, 

\item \textit{there exists an $A$%
-invariant Sylow $p$-subgroup of $G$  for any prime $p$ dividing the order of $G$.}
\end{enumerate}
However, even in this case, the conjecture remains open. A seminal survey paper by Turull \cite{Turull1994}, who obtained the most definitive results concerning the coprime case, summarizes the state of research until 1994. His key findings can be summarized as follows.

\textbf{Theorem.} \textit{Let $A$ be a group acting on the solvable group $G$ and assume that $(|G|,|A|)=1$.
}
\begin{enumerate}

\item \textit{If $A$ is solvable, then $h(G)\leq 2\ell (A)+h(C_{G}(A))$}.

\item \textit{If A acts with regular orbits on G (that is, for all $B\leq A$ and for each $B$-invariant elementary abelian section $S$ of $G$, there exists $v\in S$ such that $C_B(S)=C_B(v)$), then $h(G)\leq \ell (A)+\ell (C_{G}(A))$.}

\end{enumerate}
Consequently, the conjecture is true in the coprime case if $A$ acts on $G$ with regular orbits. In the noncoprime case, aside from E. Dade's basic result \cite{Dade}, which provides an exponential bound for $h(G)$ in terms of $\ell(A)$, only scattered results are available. One of the most significant among these was obtained by Jabara in \cite{Jabara}, who established a quadratic bound, $h(G) \leq 7\ell(A)^2$, for the case where $A$ is cyclic.

In this paper, we make use of the numerical invariant $\mathbf{c}(G;A)$, introduced in \cite{nonco} and referred to as the fixing size of $A$ on $G$, which is associated with the pair $(G,A)$. The invariant $\mathbf{c}(G;A)$ appears to have the potential to play the role of 
$C_G(A)$-related invariants, particularly in the non-coprime setting. It is important to note that $\mathbf{c}(G;A)$ equals zero if the action is fixed point free and is equal to $\ell(C_{G}(A))$ in the coprime case. Furthermore, it possesses the desirable property that $$\mathbf{c}(G;A)=\mathbf{c}(G/N;A)+\mathbf{c}(N;A)$$for any $A$-invariant normal subgroup $N$ of $G$. This additivity resembles Property (3)  mentioned above in the coprime action case.

\textbf{We adopt the notation $\pi=\pi(G)\cap\pi(A)$ throughout the paper.}

Suppose the action of a nilpotent group $A$ on the solvable group $G$ is noncoprime. We first address the extreme case where the set $\pi$ is a singleton. For this case, we prove that $h(G)\leq 2\ell (A)$ provided the action of $A$ on $G$ is fixed point free. As is the case with the proof of main theorem of \cite{Turull1984}, this result is derived as a corollary to a more general result, namely
 we prove that $h(G)\leq 2\ell (A)+\mathbf{c}(G;A)$
under the additional assumption that a suitable Hall subgroup of $G$ is left invariant by $A.$ This additional requirement is admittedly an awkward assumption; it unnaturally relieves us of the inherent complexities of noncoprime action, effectively providing a partial analogue to Property (4) mentioned above of coprime actions. However, in the fixed point free case, this assumption is automatically satisfied, eliminating any additional constraint on our final objective. Namely, we prove that

\textbf{Theorem.} \textit{Let $G$ be a  solvable group, $A$ a nilpotent subgroup of $Aut(G)$, and let $|\pi|\leq 1$. If  $A$ leaves a Hall $\pi^{\prime}$-subgroup of $G$ invariant then 
 $$
h(G)\leq 2\ell (A)+\textbf{c}(G;A).$$}

We next study the case where $A$ acts with regular orbits. We derive the inequality $h(G)\leq \ell(A)$ when $A$ is of prime power order and is fixed point free as an immediate corollary of the following theorem.

\textbf{Theorem.} \textit{Let $G$ be a solvable group and $A$ be a group of order $p^n$ for some prime $p$, acting by automorphisms and with regular orbits on $G$. If $G$ contains an $A$-invariant Hall $p'$-subgroup of $G$ then $$h(G)\leq \ell(A)+\mathbf{c}(G;A).$$}

Additionally, we observe that the following holds:

\textbf{Theorem.} \textit{Let $G$ be a solvable group and $A$ be a nilpotent group acting with regular orbits on $G$. Suppose that $G$ has an $A$-invariant Hall $\pi'$-subgroup and that Hall $\pi$-subgroups of $G$ are abelian. Then $$h(G)\leq \ell(A)+\mathbf{c}(G;A)+2.$$ In particular, we have $h(G)\leq \ell(A)+2$ whenever $A $ is fixed point free and Hall $\pi$-subgroups of $G$ are abelian.}

In the last section we study the fixed point free action of a nilpotent group $A$ under the condition that Hall $\pi$-subgroups of $G$ are nilpotent and obtain the following.

\textbf{Theorem.} \textit{Let $G$ be a solvable group, and $A$ be a nilpotent subgroup of $Aut(G)$ acting fixed point freely on $G$.
Assume that Hall $\pi$-subgroups of $G$ are nilpotent. Then $h(G)\leq 2\ell (A).$}  
\section{The Fixing Size of A on G}
\begin{definition}
Let $A$ act on the group $G$. Let $$1=N_{k+1}\vartriangleleft
N_{k}\vartriangleleft N_{k-1}\vartriangleleft \cdots \vartriangleleft
N_{1}=G $$ be a series of $A$-invariant subnormal subgroups of $G$ such
that $N_{i}/N_{i+1}$ has no proper nontrivial $A$-invariant normal subgroup for each $i=1,\ldots ,k$,
in short an $A$-composition series of $G.$ We let $$\mathbf{c}(G;A) = \left\vert \left\{ i:[N_{i},A]\leq N_{i+1}\right\} \right\vert $$ and
call it \textbf{the fixing size of the action of }$A$\textbf{\ on }$G.$ This
number is an invariant of the pair $(G,A)$ and does not depend on the choice
of the $A$-composition series under consideration.
\end{definition}

\begin{proposition}\label{Prop 2} Let $A$ act on $G$. Then the following hold. 

\begin{enumerate} 
\item $\mathbf{c}(G;A)=\mathbf{c}(N;A)+\mathbf{c}(G/N;A)$ for every $A$-invariant normal subgroup $N$ of $G$.
\item If $1=N_{k+1}\vartriangleleft N_{k}\vartriangleleft N_{k-1}\vartriangleleft \cdots \vartriangleleft N_{1}=G$ is an $A$-invariant series of $G$ then $\mathbf{c}(G;A)=\sum_{i=1}^{k}\mathbf{c}(N_{i}/N_{i+1};A).$
\item $\mathbf{c}(H;A)\leq \mathbf{c}(G;A)$ for any $A$-invariant subgroup $H$ of $G$.
\end{enumerate}

\end{proposition}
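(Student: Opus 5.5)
The plan is to lean on the invariance of $\mathbf{c}(G;A)$ under the choice of $A$-composition series, which the Definition asserts. That invariance comes from the Jordan--Hölder theorem for groups with operators (here $G\rtimes A$ acting, or $G$ as an $A$-group). Any two $A$-composition series have factors that are $A$-isomorphic in pairs after a permutation. The property $[N_i,A]\le N_{i+1}$ says exactly that $A$ acts trivially on $N_i/N_{i+1}$, and this is preserved under $A$-isomorphism. So the count is well defined, and all three parts reduce to choosing a convenient series.

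For (1), let $N$ be an $A$-invariant normal subgroup. Take an $A$-composition series of $N$, and an $A$-composition series of $G/N$, which lifts to $A$-invariant subnormal subgroups $N=M_{r+1}\lhd M_r\lhd\cdots\lhd M_1=G$ with $A$-simple factors $M_i/M_{i+1}\cong (M_i/N)/(M_{i+1}/N)$ as $A$-groups. Concatenating the two gives an $A$-composition series of $G$. Its factors are those of $N$ together with those of $G/N$, with the same $A$-actions. Hence the count of $A$-trivial factors is additive. One small point to check: $[M_i,A]\le M_{i+1}$ holds if and only if $[M_i/N,A]\le M_{i+1}/N$, which is clear since $N\le M_{i+1}$.

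For (2), the same concatenation works for an arbitrary $A$-invariant series with normal steps $N_{i+1}\lhd N_i$. Refine each $N_i/N_{i+1}$ by an $A$-composition series and lift it. Counting trivial factors in each segment gives $\mathbf{c}(N_i/N_{i+1};A)$, so the total is the sum. Alternatively, this follows from (1) by induction on $k$, applied to $N_k\lhd\cdots$. More precisely, I apply (1) to $N_{k}$ inside $N_{k-1}$, and so on, using that each $N_{i+1}$ is normal in $N_i$ (subnormal series suffices since (1) only uses normality in the next term).

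Part (3) is the step needing real care. Take an $A$-composition series $N_i$ of $G$ and intersect it with $H$, giving the $A$-invariant series $H\cap N_i$ with $H\cap N_{i+1}\lhd H\cap N_i$. By (2), $\mathbf{c}(H;A)=\sum_i \mathbf{c}\bigl((H\cap N_i)/(H\cap N_{i+1});A\bigr)$. Each factor $(H\cap N_i)/(H\cap N_{i+1})$ embeds $A$-equivariantly into $N_i/N_{i+1}$ via $x\mapsto xN_{i+1}$. So it suffices to show the following for an $A$-simple $A$-group $S=N_i/N_{i+1}$ and an $A$-invariant subgroup $T\le S$: $\mathbf{c}(T;A)\le \mathbf{c}(S;A)$, that is, $\mathbf{c}(T;A)\le 1$, with $\mathbf{c}(T;A)=0$ when $A$ acts nontrivially on $S$.

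The main obstacle is this local claim. In the solvable setting of the paper, $S$ is an elementary abelian $q$-group that is irreducible as an $\mathbb{F}_qA$-module. I would first try the following argument. If $A$ acts trivially on $S$, then irreducibility forces $|S|=q$, so $T$ is $1$ or $S$ and $\mathbf{c}(T;A)\le 1$. If $A$ acts nontrivially, the aim is to show $T$ has no $A$-trivial composition factor, which fails in general (for instance $C_S(A)$ may be nontrivial). So the inequality genuinely needs a weaker form: I would instead compare via Jordan--Hölder of $G$ restricted differently, or restrict to the hypotheses (e.g. $A$ a $p$-group acting on a $p$-section, where trivial factors of $T$ force $C_S(A)\ne1$ and hence $S$ trivial). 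I expect to have to settle exactly which hypothesis makes (3) hold, and would test it first in the case $A$ a $p$-group, where the fixed points of $A$ on any nontrivial $p$-group are nontrivial.
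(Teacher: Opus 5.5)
\paragraph{Verdict.} Parts (1) and (2) are fine: the Jordan--H\"older theorem for $A$-groups plus concatenating or refining series is all that is needed, and the paper itself only says the proposition is obvious. Part (3) has a genuine gap: you never prove it. Your reduction is the right one. You intersect an $A$-composition series $N_i$ of $G$ with $H$ and use the $A$-isomorphism $(H\cap N_i)/(H\cap N_{i+1})\cong_A (H\cap N_i)N_{i+1}/N_{i+1}\le N_i/N_{i+1}$. But you then abandon it because of a false claim.

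\paragraph{The missing step.} When $G$ is solvable, $S=N_i/N_{i+1}$ is abelian, since $S'$ is a proper $A$-invariant normal subgroup of $S$. Hence every $A$-invariant subgroup $T\le S$ is automatically normal in $S$, so $T=1$ or $T=S$ by $A$-simplicity. In particular $C_S(A)$, which is always $A$-invariant, is trivial whenever $A$ acts nontrivially on $S$. Your remark that it ``may be nontrivial'' is wrong in this setting. It follows that each factor $(H\cap N_i)/(H\cap N_{i+1})$ is either trivial or $A$-isomorphic to $N_i/N_{i+1}$. After deleting repetitions, $\{H\cap N_i\}$ is therefore an $A$-composition series of $H$ whose $A$-trivial factors occur among those of $G$, so $\mathbf{c}(H;A)\le\mathbf{c}(G;A)$ for arbitrary $A$. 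Your proposed fallback of restricting to $A$ a $p$-group acting on $p$-sections is unnecessary, and it would not suffice. The paper applies (3) in Theorem~\ref{theo 3} to $H=C_G(A_{p'})$ with $A$ nilpotent of mixed order.

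\paragraph{Hypothesis needed.} Your suspicion that (3) needs a hypothesis missing from the statement is correct: for nonsolvable $G$ it is false. Take $G=A_5$ and $A=\langle\alpha\rangle$, with $\alpha$ conjugation by $(12)$. Then $G$ is $A$-simple and $[G,A]=G$, so $\mathbf{c}(G;A)=0$. However, $H=C_G(\alpha)\cong S_3$ has $\mathbf{c}(H;A)=2$. Even $A=1$ and $H\cong C_2\times C_2\le A_5$ gives $2>1$. So (3) must be read with $G$ solvable, which is the paper's standing setting. With that hypothesis, the argument above completes your proof.
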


\begin{proof}
This claim is obviously true.
\end{proof}

\begin{proposition}\label{Prop 3} Let $A$ act on $G$. Then we have the following.

$(1)$ If $G$ is a $p$-group and $A$ is $p$-nilpotent then $\mathbf{c}%
(G;A)=\ell (C_{G}(A_{p^{\prime }})).$

$(2)$ If $G$ is solvable and $(|G|,|A|)=1$ then $\mathbf{c}(G;A)=\ell(C_{G}(A)).$

\end{proposition}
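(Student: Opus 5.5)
Both parts follow the same pattern. First we reduce to a single $A$-composition factor using the additivity in Proposition \ref{Prop 2}(2). Then we match the count with $\ell$ of a centralizer, using the fact that in the relevant situations fixed points behave well under quotients.

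For $(1)$, write $A=A_p\times A_{p'}$; this is possible since $A$ is $p$-nilpotent. Choose an $A$-composition series $G=N_1>\cdots>N_{k+1}=1$ of the $p$-group $G$. Each factor $V=N_i/N_{i+1}$ is an elementary abelian $p$-group and an irreducible $\mathbb{F}_pA$-module. Since $O_p(A/C_A(V))=1$ for an irreducible module in characteristic $p$, and $A_p$ is a normal $p$-subgroup of $A$, the group $A_p$ acts trivially on $V$. Hence $[V,A]=0$ if and only if $[V,A_{p'}]=0$. For an irreducible module this happens exactly when $C_V(A_{p'})=V$; otherwise $C_V(A_{p'})$ is a proper $A$-submodule, since $A_{p'}\trianglelefteq A$, and so it equals $0$. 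Thus $\ell(C_V(A_{p'}))$ equals $\dim V=1$ or $0$ according to whether $V$ is a trivial or nontrivial factor. Here we note that a trivial irreducible factor is cyclic of order $p$, so it contributes exactly $1$.

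It remains to add these up. Because $(|A_{p'}|,|G|)=1$, coprime action gives $C_{N_i/N_{i+1}}(A_{p'})=C_{N_i}(A_{p'})N_{i+1}/N_{i+1}$. This yields
$$\ell(C_G(A_{p'}))=\sum_i \ell\bigl(C_{N_i}(A_{p'})/C_{N_{i+1}}(A_{p'})\bigr)=\sum_i \ell\bigl(C_{N_i/N_{i+1}}(A_{p'})\bigr)=\mathbf{c}(G;A).$$

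For $(2)$ the argument is the same with $A$ in place of $A_{p'}$. Each $A$-composition factor $V$ of the solvable group $G$ is an elementary abelian $q$-group on which $A$ acts irreducibly. If $A$ is trivial on $V$, then irreducibility forces $|V|=q$, so $\ell(C_V(A))=1$. Otherwise $C_V(A)$ is a proper $A$-invariant subgroup; it is normal because $V$ is abelian, hence it is $0$. Coprimeness gives $C_{N_i/N_{i+1}}(A)=C_{N_i}(A)N_{i+1}/N_{i+1}$, which is property (3) of the introduction. Summing along the series then gives $\ell(C_G(A))=\mathbf{c}(G;A)$.

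The only step needing care is in $(1)$: showing $A_p$ acts trivially on each irreducible factor. This is the standard fact that a $p$-group acting on a nonzero $\mathbb{F}_p$-space has nonzero fixed points. Since $A_p\trianglelefteq A$, the subspace $C_V(A_p)$ is a nonzero $A$-submodule, so it equals $V$. Everything else is routine bookkeeping with the lifting of fixed points in coprime action.
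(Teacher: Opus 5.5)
Your plan is the paper's: compare each $A$-composition factor $V$ with $C_V(A_{p'})$, then sum using the coprime lifting $C_{N_i/N_{i+1}}(A_{p'})=C_{N_i}(A_{p'})N_{i+1}/N_{i+1}$. Part $(2)$ and the summation step are fine; you even justify the additivity of $\ell(C_G(A_{p'}))$ along the series, which the paper only asserts. The gap is at the start of $(1)$.

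Being $p$-nilpotent only means that $A$ has a normal $p$-complement $A_{p'}$, so $A=A_{p'}A_p$ with $A_{p'}$ normal. It does not give $A=A_p\times A_{p'}$, and a Sylow $p$-subgroup $A_p$ need not be normal in $A$. Then $A_p$ need not act trivially on an irreducible $\mathbb{F}_pA$-module. For example, $A=GL(2,2)\cong S_3$ is $2$-nilpotent and acts irreducibly on $G=\mathbb{F}_2^2$, yet an involution of $A$ swaps two of the three nonzero vectors. So the step you single out as the only one needing care is false in the generality of the statement. As written, your argument only covers the case where $A_p$ is normal in $A$.

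The repair is short and is exactly the paper's argument. You do not need $A_p$ to act trivially on $V$; you only need the implication $[V,A_{p'}]=0\Rightarrow [V,A]=0$, since the converse is trivial. If $A_{p'}$ centralizes $V$, then $A$ acts on $V$ through the $p$-group $A/A_{p'}\cong A_p$, so $C_V(A)=C_V(A_p)\neq 0$. As $C_V(A)$ is an $A$-invariant subgroup of the abelian group $V$, irreducibility forces $C_V(A)=V$. Use this in place of your $O_p$ argument and the rest of your proof goes through unchanged. That includes your correct use of the normality of $A_{p'}$ to see that $C_V(A_{p'})$ is either $0$ or $V$.
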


\begin{proof}$(1)$ Let $1=N_{k+1}\vartriangleleft N_{k}\vartriangleleft
N_{k-1}\vartriangleleft \cdots \vartriangleleft N_{1}=G$ be an 
$A$-composition series of $G$. Suppose that $[N_i,A_{p'}]\leq N_{i+1}$. Then $C_{N_i/N_{i+1}}(A)=C_{N_i/N_{i+1}}(A_{p})$ is nontrivial and hence we obtain $[N_i,A]\leq N_{i+1}$. This means that $[N_i,A_{p'}]\leq N_{i+1}$ if and only if $[N_i,A]\leq N_{i+1}$. On the other hand notice that $$\ell(C_G(A_{p'})) = \sum_{i=1}^k \ell(C_{N_i/N_{i+1}}(A_{p'})).$$ Observe that $[N_i,A]\leq N_{i+1}$ implies $\ell(N_i/N_{i+1})=1$ and there is no contribution otherwise. Therefore we have $$\ell(C_G(A_{p'})) = |\{i : [N_i, A] \leq N_{i+1}\}| = c(G; A)$$ as claimed.

$(2)$ Let $G$ \ be a solvable group and $%
1=N_{k+1}\vartriangleleft N_{k}\vartriangleleft N_{k-1}\vartriangleleft
\cdots \vartriangleleft N_{1}=G$ be an $A$-composition series of $G.$  Since $G$ is solvable, each factor $N_i/N_{i+1}$ is elementary abelian. Clearly $%
\mathbf{c}(G;A)=\sum\nolimits_{i=1}^{k}\mathbf{c}(N_{i}/N_{i+1};A).$ As $%
(|G|,|A|)=1$ it holds that $A$ is $p$-nilpotent for any prime $p$ dividing
the order of $G$ with $O_{p^{\prime }}(A)=A$. It follows by (1) that $$\mathbf{c}%
(G;A)=\sum\nolimits_{i=1}^{k}\ell (C_{N_{i}/N_{i+1}}(A))=\ell (C_{G}(A).$$

\end{proof}

Henceforth, $H_{\sigma}$ will denote a Hall $\sigma$-subgroup of the group $H$ for a set of primes $\sigma.$

\begin{proposition}
\label{Prop 4}Let $A$ act on the solvable group $G$. Suppose that $A$
leaves invariant a Sylow subgroup $G_{p}$ and that $A$ is $p$-nilpotent for any $p\in \pi (G).$ Then 
\begin{equation*}
\mathbf{c}(G;A)=\sum\nolimits_{p\in \pi (G)}\ell (C_{G_{p}}(O_{p^{\prime
}}(A))).
\end{equation*}%
In particular, $\mathbf{c}(G;A)=0$ if and only if $C_{G}(A)=1.$
\end{proposition}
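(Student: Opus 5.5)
The plan is to compute $\mathbf{c}(G;A)$ factor by factor along a suitable $A$-invariant normal series of $G$. On each factor we apply Proposition \ref{Prop 3}(1). We then reassemble the answer inside the $A$-invariant Sylow subgroups using the coprime action of $O_{p'}(A)$ on $p$-groups.

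First I fix an $A$-chief series $1=N_{k+1}\vartriangleleft N_k\vartriangleleft\cdots\vartriangleleft N_1=G$. Here every $N_i$ is normal in $G$ and $A$-invariant, and each factor $N_i/N_{i+1}$ is an elementary abelian $q_i$-group for some prime $q_i$, since $G$ is solvable. By Proposition \ref{Prop 2}(2), $\mathbf{c}(G;A)=\sum_i \mathbf{c}(N_i/N_{i+1};A)$. For a factor that is a $q$-group, $A$ is $q$-nilpotent by hypothesis, so Proposition \ref{Prop 3}(1) gives $\mathbf{c}(N_i/N_{i+1};A)=\ell(C_{N_i/N_{i+1}}(O_{q'}(A)))$. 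Grouping the factors by prime, it suffices to show, for each fixed $p\in\pi(G)$,
$$\sum_{i:\,q_i=p}\ell\bigl(C_{N_i/N_{i+1}}(O_{p'}(A))\bigr)=\ell\bigl(C_{G_p}(O_{p'}(A))\bigr).$$

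For this I intersect the series with the given $A$-invariant Sylow subgroup $G_p$. Since $N_i\trianglelefteq G$, the subgroup $G_p\cap N_i$ is an $A$-invariant Sylow $p$-subgroup of $N_i$. Hence $(G_p\cap N_i)N_{i+1}/N_{i+1}$ is a Sylow $p$-subgroup of $N_i/N_{i+1}$. This gives an $A$-isomorphism $(G_p\cap N_i)/(G_p\cap N_{i+1})\cong N_i/N_{i+1}$ when $q_i=p$, and shows the factor is trivial when $q_i\neq p$. So $G_p$ has an $A$-invariant normal series whose nontrivial factors are $A$-isomorphic to exactly the $p$-factors of the chief series.

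Now $B=O_{p'}(A)$ acts coprimely on the $p$-group $G_p$. By the coprime fixed point property (3) from the introduction, $C_{X/Y}(B)=C_X(B)Y/Y$ for $B$-invariant $Y\trianglelefteq X\le G_p$. Applying this along the series of $G_p$ gives $\ell(C_{G_p}(B))=\sum_i \ell\bigl(C_{(G_p\cap N_i)/(G_p\cap N_{i+1})}(B)\bigr)$. Combined with the isomorphisms above, this yields the displayed identity and hence the formula. The only delicate point is this bookkeeping: the Sylow intersection must produce exactly the $p$-chief factors, and the centralizer lengths must add, which is where coprimeness of $B$ on $G_p$ is used. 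This is the step I expect to need the most care.

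For the final assertion, suppose first that $C_G(A)\ne1$. Then $H=C_G(A)$ is an $A$-invariant subgroup with $\mathbf{c}(H;A)=\ell(H)>0$, because $A$ acts trivially on every factor of any composition series of $H$. Proposition \ref{Prop 2}(3) then gives $\mathbf{c}(G;A)>0$. Conversely, suppose $\mathbf{c}(G;A)>0$. Then some $P=C_{G_p}(O_{p'}(A))\ne1$. This $P$ is $A$-invariant because $O_{p'}(A)\trianglelefteq A$, and $A=O_p(A)\,O_{p'}(A)$ since $A$ is $p$-nilpotent. The $p$-group $O_p(A)$ acting on the nontrivial $p$-group $P$ has a nontrivial fixed point. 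Such a point is centralized by all of $A$, so $C_G(A)\ne1$.
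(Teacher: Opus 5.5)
Your computation of the formula is correct and is essentially the paper's proof. The paper inducts on a maximal $A$-invariant normal subgroup $N$, identifies $G/N$ with $Q/(Q\cap N)$ for the invariant Sylow $q$-subgroup $Q$, and combines Proposition \ref{Prop 3}(1) with the coprime action of $O_{q'}(A)$ on $Q$. You unroll that induction along a full $A$-chief series and use the same three ingredients on every factor, which makes the bookkeeping cleaner.

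The one flaw is in the converse direction of the ``in particular'' clause, which the paper does not spell out. $p$-nilpotence of $A$ does \emph{not} give $A=O_p(A)O_{p'}(A)$. For example, $A=S_3$ with $p=2$ is $2$-nilpotent but has $O_2(A)=1$. So a point of $P$ fixed by $O_p(A)$ need not be centralized by $A$.

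The fix is to use a Sylow $p$-subgroup $A_p$ of $A$ in place of $O_p(A)$. Then $A=A_pO_{p'}(A)$, and $A_p$ normalizes the nontrivial $p$-group $P=C_{G_p}(O_{p'}(A))$. Hence $C_P(A_p)\neq 1$, and $C_P(A_p)\le C_G(A)$, which is what you wanted.
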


\begin{proof}
Set $\mathcal{S}$ be a set of $A$-invariant Sylow $p$-subgroups $p$ for all $p\in\pi(G).$ Pick a maximal $A$-invariant normal subgroup $N$ of $G$. Let $q$ the prime dividing $|G/N|$. Then $G_{p}\cap N$ is an $A$-invariant Sylow $p$-subgroup of $N$ for any $p\in \pi(N)$ with $p\ne q.$   It follows by induction that 
$$\mathbf{c}(N;A)=\ell (C_{G_{q}\cap N}(O_{q^{\prime
}}(A)))+\sum\nolimits_{q\neq p\in \pi (N)}\ell (C_{G_{p}}(O_{p^{\prime }}(A))).$$

On the other hand we set $Q=G_q$ and observe that $\mathbf{c}(G/N;A)=\mathbf{c}(Q/(Q\cap N);A)=\ell (C_{Q/(Q\cap
N}(O_{q^{\prime }}(A)))=\ell (C_{Q}(O_{q^{\prime }}(A))/C_{Q\cap
N}(O_{q^{\prime }}(A)))$.
This yields the result.
\end{proof}

\begin{proposition}
\label{Prop 5} Let $A$ act on the solvable group $G$. Suppose that $A$
leaves invariant a Sylow subgroup $G_{p}$ for any $p\in \pi (G)$ and that $A$ is nilpotent. Then

$(1)$ $\mathbf{c}(G;A)=\mathbf{c}(G_{p};A)+\mathbf{c}(G_{p^{\prime }};A),$

$(2)$ $\mathbf{c}(G_{p^{\prime }};A)=\mathbf{c}(C_{G_{p^{\prime
}}}(A_{p});A_{p^{\prime }})$ where $G_{p^{\prime }}=\prod\nolimits_{p\neq
q\in \pi (G)}G_{q}$.

\end{proposition}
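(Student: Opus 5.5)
The plan is to reduce both parts to Proposition~\ref{Prop 4}, using only that a nilpotent $A$ is $q$-nilpotent for every prime $q$. Write $A=A_p\times A_{p'}$, where $A_p$ is the Sylow $p$-subgroup and $A_{p'}=O_{p'}(A)$ is the Hall $p'$-subgroup, so that $O_{q'}(A)=A_{q'}$ for each prime $q$. By hypothesis, for each $q\in\pi(G)$ there is an $A$-invariant Sylow $q$-subgroup $G_q$; we fix these once and for all. We take $G_{p'}=\prod_{q\neq p}G_q$. For this to make sense as a subgroup we first arrange the chosen $A$-invariant Sylow subgroups to be pairwise permutable, so that they form an $A$-invariant Sylow system; alternatively we read $G_{p'}$ as an $A$-invariant Hall $p'$-subgroup whose Sylow subgroups are the chosen $G_q$. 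This setup point is the one place where some care is needed: we need $G_{p'}$ to be an $A$-invariant group whose Sylow $q$-subgroups are exactly the $A$-invariant $G_q$, so that Proposition~\ref{Prop 4} applies to $G_{p'}$ as well.

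For (1), apply Proposition~\ref{Prop 4} three times. For $G$ it gives $\mathbf{c}(G;A)=\sum_{q\in\pi(G)}\ell(C_{G_q}(A_{q'}))$. For the $p$-group $G_p$, Proposition~\ref{Prop 3}(1) gives $\mathbf{c}(G_p;A)=\ell(C_{G_p}(A_{p'}))$. For $G_{p'}$, whose $A$-invariant Sylow subgroups are the $G_q$ with $q\neq p$, it gives $\mathbf{c}(G_{p'};A)=\sum_{q\neq p}\ell(C_{G_q}(A_{q'}))$. Adding the last two recovers the first, which proves (1).

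For (2), note first that $C:=C_{G_{p'}}(A_p)$ is $A$-invariant because $A_p\trianglelefteq A$, and that $A_{p'}$ acts on it. Since $A_p$ acts coprimely on each $A$-invariant $G_q$ with $q\neq p$, the group $C_{G_q}(A_p)$ is a Sylow $q$-subgroup of $C$, and it is $A_{p'}$-invariant. Apply Proposition~\ref{Prop 4} to the pair $(C,A_{p'})$. Here $A_{p'}$ is nilpotent, and its $q'$-part is $(A_{p'})_{q'}=A_{\{p,q\}'}$. This gives
\[
\mathbf{c}(C;A_{p'})=\sum_{q\neq p}\ell\bigl(C_{C_{G_q}(A_p)}(A_{\{p,q\}'})\bigr)=\sum_{q\neq p}\ell\bigl(C_{G_q}(A_pA_{\{p,q\}'})\bigr)=\sum_{q\neq p}\ell\bigl(C_{G_q}(A_{q'})\bigr),
\]
since $A_pA_{\{p,q\}'}=A_{q'}$ for $q\neq p$. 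By the computation in (1), the right-hand side equals $\mathbf{c}(G_{p'};A)$, which completes (2).
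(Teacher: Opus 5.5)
Your proof is correct. Part (1) is the paper's argument: the paper just says (1) follows from Proposition~\ref{Prop 4}, and you spell this out.

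For (2) you take a genuinely different route. The paper never uses Sylow subgroups there. It takes an $A$-composition series $1=H_0<\cdots<H_n=H$ of $H=G_{p'}$ and notes that each $A$-irreducible factor $M_i$ satisfies $\mathbf{c}(M_i;A)=\mathbf{c}(C_{M_i}(A_p);A_{p'})$. The reason is that $C_{M_i}(A_p)$ is $A$-invariant, so either it is trivial, or $A_p$ acts trivially and $M_i$ is $A_{p'}$-irreducible. It then uses the coprime covering property $C_{H_i/H_{i-1}}(A_p)=C_{H_i}(A_p)H_{i-1}/H_{i-1}$. Hence the $C_{H_i}(A_p)$ form an $A_{p'}$-invariant series of $C_H(A_p)$ with factors $C_{M_i}(A_p)$, and additivity (Proposition~\ref{Prop 2}) finishes.

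You instead apply Proposition~\ref{Prop 4} again, to $(C_{G_{p'}}(A_p),A_{p'})$, and match the result term by term with the formula from (1). The paper's route is shorter and more general. It proves $\mathbf{c}(H;A)=\mathbf{c}(C_H(A_p);A_{p'})$ for any $A$-invariant $p'$-subgroup $H$, with no need for $A$-invariant Sylow subgroups. Yours gives the explicit identity $\mathbf{c}(G_{p'};A)=\sum_{q\neq p}\ell(C_{G_q}(A_{q'}))=\mathbf{c}(C_{G_{p'}}(A_p);A_{p'})$, but it needs Glauberman's lemma.

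Your justification of that lemma is misphrased. The fact that $C_{G_q}(A_p)$ is a Sylow $q$-subgroup of $C_{G_{p'}}(A_p)$ comes from $A_p$ acting coprimely on the whole $p'$-group $G_{p'}$. Concretely, every $A_p$-invariant $q$-subgroup lies in an $A_p$-invariant Sylow $q$-subgroup, and these are all $C_{G_{p'}}(A_p)$-conjugate to $G_q$. Coprime action on $G_q$ alone, as you phrase it, does not give this.

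Finally, your first option of ``arranging'' the $A$-invariant Sylow subgroups to be pairwise permutable is not justified in the noncoprime setting. Use your second reading instead: take $G_{p'}=\prod_{q\neq p}G_q$ being an $A$-invariant subgroup as part of the hypothesis. The statement and the paper's proof presuppose exactly that.
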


\begin{proof}
$(1)$ is clear by the above proposition. Let $$1=H_{0}<H_{1}<\cdots <H_{n}=H$$ be an $A$-composition series of $
H=G_{p^{\prime }}$. Then letting $M_{i}=H_{i}/H_{i-1}$ we see that $$\mathbf{c
}(M_{i};A)=\mathbf{c}(C_{M_{i}}(A_{p});A)=\mathbf{c}(C_{M_{i}}(A_{p});A_{p^{%
\prime }}).$$ As $C_{M_{i}}(A_{p})$ is covered by $C_{H_{i}}(A_{p})$ we see
that $$\sum_{i=1}^{n}\mathbf{c}(C_{M_{i}}(A_{p});A_{p^{\prime }})=\mathbf{c}%
(C_{H}(A_{p});A_{p^{\prime }})=\sum_{i=1}^{n}\mathbf{c}(M_{i};A)=\mathbf{c}%
(H;A).$$
\end{proof}

\section{Bounding Fitting height in terms of fixing size}

Let $G$ be a $p$-solvable group. Define the characteristic subgroups $%
K_{j}=O_{p^{\prime },p,p^{\prime },p,p^{\prime },\ldots ,p^{\prime }}(G)$ and $%
L_{j}=O_{p^{\prime },p,p^{\prime },p,p^{\prime },\ldots ,p^{\prime },p}(G)$
where the number of $p$'s in the indices are $j-1$ and $j$ respectively $%
j=1,2,\ldots,m+1$. The normal series

\begin{equation*}
1\leq K_{1}<L_{1}<\cdots <K_{k}<L_{k}<\cdots <K_{m}<L_{m}\leq K_{m+1}=G
\end{equation*}%
is the upper $p$-series of the group $G$ with $p$-length $\ell _{p}(G)=m.$
We call $L_{m}/K_{m}$ the top $p$-factor and $L_{1}/K_{1}$ the bottom $p$%
-factor.

\begin{lem}\label{lem 1}
Let $G$ be a $p$-solvable group and $G_{p}$ a Sylow subgroup of $G$ and let%
\begin{equation*}
1\leq K_{1}<L_{1}<\cdots <K_{k}<L_{k}<\cdots <K_{m}<L_{m}\leq K_{m+1}=G
\end{equation*}%
be its upper $p$-series. Then $\ell ((L_{m-j}/K_{m-j})/\Phi
(L_{m-j}/K_{m-j})\geq j+1$ for any $j=0,1,\ldots ,m-1$ and hence 
\begin{equation*}
2\ell _{p}(G)-1\leq \frac{\ell _{p}(G)(\ell _{p}(G)+1)}{2}\leq \ell (G_{p}).
\end{equation*}
Furthermore  in the case of equality  the top $p$-factor
is of order $p$ and we have $m=1$ or $p=2=m$ and $G=L_{m}.$
\end{lem}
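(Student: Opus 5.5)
The plan is to reduce the statement to a bound on the $p$-length of $p$-solvable linear groups. The numerical inequalities and the equality discussion then follow by summing and by inspecting small cases.

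\emph{Reduction.} Fix $j$ and pass to $\bar G=G/K_{m-j}$. Then $O_{p'}(\bar G)=1$, and $P:=O_p(\bar G)=L_{m-j}/K_{m-j}$ is the bottom $p$-factor of $\bar G$. Since the upper $p$-series of $\bar G$ is the image of that of $G$ from $K_{m-j}$ on, we have $\ell_p(\bar G)=j+1$. By the Hall--Higman lemma, $C_{\bar G}(P)\le P$. The kernel of the action of $\bar G$ on $V=P/\Phi(P)$ acts on $P$ as a $p$-group, by Burnside's theorem on automorphisms trivial on the Frattini quotient. Hence this kernel is a normal $p$-subgroup of $\bar G$, so it equals $P$. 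Thus $H=\bar G/P$ embeds faithfully in $GL(V)$, with $\dim V=\ell(V)=d$. We also have $O_p(H)=1$, since $O_p(\bar G)=P$, and $\ell_p(H)=j$. So it suffices to prove the following auxiliary claim.

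\emph{Auxiliary claim.} If $H\le GL(d,p)$ is $p$-solvable with $O_p(H)=1$, then $\ell_p(H)\le d-1$.

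This gives $d\ge j+1$. Summing over the $p$-factors, and using that $\ell$ of each $p$-factor is at least $\ell$ of its Frattini quotient, yields
\[
\ell(G_p)\ge \sum_{j=0}^{m-1}(j+1)=\frac{m(m+1)}{2}.
\]
The inequality $2m-1\le m(m+1)/2$ is equivalent to $(m-1)(m-2)\ge 0$, which always holds.

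\emph{Auxiliary claim (main obstacle).} I would argue by induction on $d$.
\begin{itemize}
\item If $V$ is reducible, choose $0<W<V$ invariant. The kernel of $H$ on $W\oplus V/W$ is a normal $p$-subgroup, hence trivial. So $H$ embeds in $H/C_H(W)\times H/C_H(V/W)$. Each factor modulo its $O_p$ satisfies the induction hypothesis, and $O_p$ contributes at most one more to $p$-length. This gives $\ell_p(H)\le\max(\dim W,\dim V/W)\le d-1$.
\item If $V$ is irreducible but imprimitive, I would use induction on the blocks together with the permutation action of $H$ on the blocks.
\item The primitive case is where I expect the real work. Here every abelian normal subgroup of $H$ is cyclic and acts homogeneously, so $F(H)$ is a central product of a cyclic group and extraspecial-type groups. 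I would then follow the Hall--Higman analysis: $H/F(H)$ embeds in a product of symplectic groups of much smaller dimension, and induction applies to it.
\end{itemize}
If this last case resists, a fallback is to bound $\ell_p(H)$ by the nilpotency class $d-1$ of a Sylow $p$-subgroup of $GL(d,p)$. For this I would use an argument that each successive $p$-factor forces a strictly longer chain $[V,Q,\dots,Q]$ of commutator subspaces.

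\emph{Equality case.} If equality holds throughout, then $m(m+1)/2=2m-1$, forcing $m\in\{1,2\}$. Moreover every $p$-factor is elementary abelian of rank exactly $j+1$. In particular the top $p$-factor has rank $1$, i.e.\ order $p$, and there is no $p$-part above $L_m$, so $G=L_m$. In the case $m=2$ one has a group of order $p$ acting faithfully on a $p'$-group inside $GL(2,p)$ modulo $O_p$. I would check directly which primes $p$ allow such an action. I am unsure this forces $p=2$, since $SL(2,3)\le GL(2,3)$ seems to allow $p=3$, so this final clause needs careful verification against the intended reading of ``equality''.
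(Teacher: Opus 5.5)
\textbf{The auxiliary claim is not proved.} Your reduction is correct and matches the paper: $G/L_{m-j}$ acts faithfully on the Frattini quotient of $L_{m-j}/K_{m-j}$, has trivial $O_p$, and has $p$-length $j$. The gap is the auxiliary claim $\ell_p(H)\le d-1$. The whole first assertion rests on it, and you do not prove it.
\begin{itemize}
\item The reducible case can be repaired. Pass to the semisimplification, so that $X/O_p(X)$ is again a linear group of degree $\dim W$ with trivial $O_p$.
\item The imprimitive case is only announced.
\item The primitive case fails as described. Since $O_p(H)=1$, $F(H)$ is a $p'$-group, so the symplectic groups receiving $H/F(H)$ are over fields of characteristic $r\neq p$. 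Your induction hypothesis, which concerns subgroups of $GL(d,p)$, does not apply there.
\item The fallback (bounding $p$-length by the class of a Sylow $p$-subgroup) is itself a nontrivial $p$-length theorem. Your remark on lengthening commutator chains does not establish it.
\end{itemize}

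\textbf{The missing idea is a counting argument.} No structure theory is needed: apply your reduction for every $r$ and count $p$-parts. Let $d_r$ be the rank of the Frattini quotient of $L_{m-r}/K_{m-r}$, and assume $d_r\ge r+1$ for all $r<j$. Then a Sylow $p$-subgroup of $G/L_{m-j}$ has order $p^k$ with
\[
k=\sum_{r=0}^{j-1}\ell(L_{m-r}/K_{m-r})\ \ge\ \sum_{r=0}^{j-1}(r+1)=\frac{j(j+1)}{2}.
\]
On the other hand, $G/L_{m-j}\le GL(d_j,p)$ forces $k\le d_j(d_j-1)/2$. Hence $d_j\ge j+1$. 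This is the paper's proof, phrased there via a minimal failing $j$. Applied to $H$ itself, it also yields your auxiliary claim at once.

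\textbf{The equality case.} Your step ``no $p$-part above $L_m$, so $G=L_m$'' is a non sequitur, since $G/L_m$ is only known to be a $p'$-group. Your doubt about $p=2$ is justified; the clause is in fact false. Let $V=\mathbb{F}_3^2$ and let $G=V\,SL(2,3)$ be the semidirect product with the natural action. Then $K_1=1$, $L_1=V$, $K_2/L_1\cong Q_8$ and $L_2=G$. So $p=3$, $m=2$, and $2m-1=m(m+1)/2=\ell(G_3)=3$. Replacing $SL(2,3)$ by $GL(2,3)$ keeps all these equalities and gives $G\neq L_2$. The paper derives $p=2$ from Lemma~\ref{lem 2}, which fails for $H=SL(2,3)\le GL(2,3)$ with $K=Q_8$. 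Its step ``$r\mid p+1$, hence $r=p+1$'' is invalid.

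\textbf{What equality does give.} Each $L_{m-j}/K_{m-j}$ is elementary abelian of rank $j+1$, so the top $p$-factor has order $p$; this is all the later applications need. If equality holds throughout, then also $m\le 2$. If $m=2$, then $G/L_1\le GL(2,p)$ has $O_p=1$ and order divisible by $p$, so it contains $SL(2,p)$. This forces $p\in\{2,3\}$.
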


\begin{proof}
Clearly $\ell (L_{m}/K_{m})/\Phi  (L_{m}/K_{m})\geq 1.$ Suppose there exists $i\in \mathbb{N}$ such that $\ell ((L_{m-i}/K_{m-i})/\Phi  (L_{m-i}/K_{m-i}))<i+1$ and let $j$ be the smallest such
integer then the rank $n$ of the Frattini factor group of $L_{m-j}/K_{m-j}$
satisfies $n\leq j$. As $G/L_{m-j}$ is faithfully represented on this
Frattini factor group we see that $G/L_{m-j}$ is isomorphic to a subgroup of 
$GL(n,p).$ A Sylow p-subgroup of $GL(n,p)$ has order $p^{%
\frac{(n-1)n}{2}}.$ Notice that $\frac{(n-1)n}{2}\leq \frac{(j-1)j}{2}$, but the order of
a Sylow $p$-subgroup of $G/L_{m-j}$ is $p^{k}$ with $$k=\sum_{r=0}^{j-1}\ell (L_{m-r}/K_{m-r})\geq
\sum_{r=0}^{j-1}(r+1)=\frac{j(j+1)}{2}$$ proving the first
claim.

Next suppose that $\ell _{p}(G)(\ell _{p}(G)+1)=2\ell (G_{p}).$ Then it holds 
that $$\ell (L_{m-i}/K_{m-i})=i+1=\ell ((L_{m-i}/K_{m-i})/\Phi
(L_{m-i}/K_{m-i})$$ that is, $L_{m-i}/K_{m-i}$ is an elementary
abelian $p$-group of rank $i+1$ for any $i=0,1,\ldots ,m-1.$ If $m>1$ then $
L_{m-1}/K_{m-1}$ is elementary abelian of order $p^{2}$. Therefore $
G/L_{m-1}$ is isomorphic to a subgroup $H$ of $GL(2,p)$ such that $
O_{p^{\prime }}(H)$ is nontrivial and a Sylow subgroup of $H$ acts
nontrivially on $O_{p^{\prime }}(H).$ By Lemma \ref{lem 2} below we see that either $m=1$ or $p=2$ and $G/L_{m-1}$ is isomorphic to $GL(2,2)$. If $m>2$ consider the group $G/L_{m-2}$ which is faithfully represented on the 3-dimensional vector space $L_{m-2}/K_{m-2}$. Thus $G/L_{m-2}$ is a solvable subgroup of $GL(3,2)$ having a proper quotient $G/K_{m-1}$ isomorphic to $S_{4}$. This contradiction shows that $p=2=m$ and $G=L_{m}$.
\end{proof}
 \begin{lem} \label{lem 2} Let $H$ be a subgroup of $GL(2,p)$ such that $1\neq K=O_{p^{\prime }}(H)$ with  $C_{H}(K)\leq K$. Then $K=H$ or $p=2$.
\end{lem}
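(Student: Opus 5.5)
Suppose $K\neq H$ and $p$ is odd; I will derive a contradiction. Since $K\neq H$ and $K$ is the largest normal $p'$-subgroup, $p$ divides $|H|$. A Sylow $p$-subgroup $P$ of $GL(2,p)$ has order $p$, so $P=\langle x\rangle$ with $x$ a transvection (in a suitable basis, $x=\begin{pmatrix}1&1\\0&1\end{pmatrix}$). Because $C_H(K)\leq K$ and $K$ is a $p'$-group, $x$ does not centralize $K$. The plan is to show that this is impossible when $p$ is odd.

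First I reduce to a small $x$-invariant subgroup of $K$ on which $x$ acts nontrivially. By coprime action (Property (1) in the introduction), $K=[K,x]C_K(x)$, and $[K,x]\neq 1$. The normalizer of $P$ in $GL(2,p)$ is the Borel subgroup $B$ of upper triangular matrices. So I will show that $[K,x]$, or a suitable $x$-invariant subgroup $T\leq K$ with $[T,x]\neq 1$, has a structure forcing $p=2$.

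\textbf{Case 1: $K$ acts reducibly on $V=\mathbb{F}_p^2$.} Since $K$ is a $p'$-group, Maschke's theorem gives $V=V_1\oplus V_2$, a sum of $K$-invariant lines, so $K$ lies in a torus (diagonal or not). If the two lines are nonisomorphic as $K$-modules, they are the only $K$-invariant lines, and $H$ permutes them. Then $x$ fixes both or swaps them. Swapping is impossible since $p$ is odd, so $x$ fixes both lines; a transvection fixes only one line, a contradiction. If the lines are isomorphic $K$-modules, then $K$ consists of scalars and is centralized by $x$, a contradiction.

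\textbf{Case 2: $K$ acts irreducibly.} I pass to the Fitting subgroup $F=F(K)$, which is nontrivial and satisfies $C_K(F)\leq F$ because $K$ is solvable (a $p'$-subgroup of $GL(2,p)$ is solvable by Dickson's classification, or directly since the only nonsolvable candidate $SL(2,5)$ has $p\neq 5$ here). $F$ is nilpotent and normal in $H$, and $x$ acts nontrivially on $F$, since otherwise $x$ centralizes $F$ and hence by the Thompson $P\times Q$ lemma or the Hall–Higman-type argument also $K$. Take a Sylow $r$-subgroup $R$ of $F$ on which $x$ acts nontrivially. Then $R$ is an $r$-subgroup of $GL(2,p)$ with $r\neq p$, so $R$ is abelian (cyclic or homocyclic of rank $\le 2$ in a torus), or, when $r=2$, possibly quaternion-type/extraspecial of order 8 inside $GL(2,p)$. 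An automorphism of order $p$ of $R$ must then act nontrivially on $\Omega_1$ or on $R/\Phi(R)$, which has rank at most 2. Hence $p\mid |GL(2,r)|=r(r-1)^2(r+1)$, and $p$ is odd and $p\ne r$.

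The key step, and the main obstacle, is finishing Case 2 when $R$ lies in a torus of $GL(2,p)$, i.e. $R$ is abelian. The normalizer of a torus is a torus extended by an involution, and the $p$-part of its order is trivial since tori have orders $(p-1)^2$ or $p^2-1$. So $x$ normalizes and centralizes $R$, a contradiction. In the nonabelian case $R\cong Q_8$ (or $R$ generalized quaternion, or with scalars), the action of $x$ on $R/\Phi(R)\cong C_2^2$ factors through $GL(2,2)\cong S_3$, so $p=3$. I would then check directly, perhaps via the structure $N_{GL(2,3)}(Q_8)=GL(2,3)$ where $O_{3'}(GL(2,3))=Q_8\cdot Z$ with quotient $S_3$ of order $6$, that the hypothesis on $K$ still holds. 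This suggests the intended claim is stated modulo such exceptional configurations, or that $C_H(K)\le K$ is used more subtly. I expect this $p=3$, $Q_8$ configuration, i.e. $H=GL(2,3)$ or $SL(2,3)$, to be where the argument must be handled carefully, possibly by showing $K\neq O_{3'}(H)$ fails.
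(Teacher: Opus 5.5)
\textbf{There is a genuine gap, and it cannot be closed: the statement as written is false.} The configuration you flag at the end is a real counterexample, not a case to be ``handled carefully''. Take $p=3$ and $H=SL(2,3)$ (or $H=GL(2,3)$). Then $K=O_{3'}(H)=Q_8\neq 1$. Since $Q_8$ is nonabelian, it acts absolutely irreducibly on $\mathbb{F}_3^2$, so $C_{GL(2,3)}(Q_8)=\{\pm I\}$ and hence $C_H(K)=Z(Q_8)\le K$. Every hypothesis holds, yet $K\neq H$ and $p\neq 2$.

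So your closing idea of ``showing $K\neq O_{3'}(H)$ fails'' cannot work: $Q_8$ really is $O_{3'}(H)$. The correct endpoint of your Case~2 is a disproof, together with the corrected statement: $K=H$, or $p=2$, or $p=3$ and $H\in\{SL(2,3),GL(2,3)\}$.

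The paper's proof breaks at the same spot. From $r\mid p+1$ it concludes $r=p+1$. That is justified only for cyclic $R$, where $p\mid|\mathrm{Aut}(R)|$ forces $p\mid r-1$ and hence $p<r\le p+1$. It fails for $r=2$, $p=3$, $R=Q_8$, since $\mathrm{Aut}(Q_8)\cong S_4$. The defect also reaches the equality clause of Lemma~\ref{lem 1}: $G=\mathbb{F}_3^2\rtimes SL(2,3)$ has $\ell_3(G)=2$ and $\ell(G_3)=3$, so equality holds with $p=3$, $m=2$.

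Your Case~1 is correct. Your Case~2 reduction does prove the corrected statement once three points are repaired.
\begin{enumerate}
\item $K$ need not be solvable. $SL(2,5)$ is a $p'$-subgroup of $SL(2,p)$ whenever $p\equiv\pm 1\pmod{10}$, e.g.\ $p=11$. Either assume $H$ solvable (as in the application to Lemma~\ref{lem 1}), or treat $K'\cong SL(2,5)$ separately: $p\nmid|\mathrm{Aut}(SL(2,5))|=120$, so $x$ would centralize an absolutely irreducible subgroup and be scalar.
\item In the abelian case you must say why $x$ normalizes a torus. It normalizes $R$, hence $C_{GL(2,p)}(R)$, and this is a torus when $R$ is not scalar.
\item $R/\Phi(R)$ need not have rank $2$; for example $C_4\circ Q_8\le GL(2,p)$ with $p\equiv 1\pmod 4$ has rank $3$. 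Argue in $PGL(2,p)$ instead. The element $x$ acts nontrivially on the image $\bar R$, since otherwise $[R,x]$ is central and coprime action gives $[R,x]=[R,x,x]=1$. The $2$-subgroups of $PGL(2,p)$ are cyclic or dihedral, and the only one admitting a nontrivial automorphism of odd order is the Klein four group. This forces $p=3$, $R=Q_8$ and $\langle R,x\rangle=SL(2,3)$.
\end{enumerate}
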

\begin{proof}
   We have $|GL(2,p)|=p(p^{2}-1)(p-1)$ and $|K|$ divides $(p^{2}-1)(p-1)$. Assume that $H\ne K.$ Then $p$ divides the order of $H.$ Let $P$ be a Sylow $p$-subgroup of $H$. As $[P,K]\neq 1$ by hypothesis we may choose $r\in \pi(K)$ and  a Sylow $r$-subgroup $R$ of $K$ such that $[P,R]\neq 1$. By the Frattini argument we may assume that $P\leq N_{H}(R)$.  It also holds that  $r$ divides $p^2-1$ and hence $r$ divides $p+1$, because $[P,R]\neq 1$. Thus we have $r=p+1$ and so $p=2$, that is, $GL(2,p)$ is isomorphic to $S_{3}.$ 
\end{proof}

\begin{theo}\label{theo 3}
Let $G$ be a solvable group, $A$ a nilpotent subgroup of $Aut(G)$. Assume that $|\pi|\leq 1$ and that $G$
contains an $A$-invariant Hall $p^{\prime }$-subgroup $G_{p^{\prime }}$ if $
p\in \pi .$ Then%
\begin{equation*}
h(G)\leq 2\ell (A)+\mathbf{c}(G;A).
\end{equation*}
\end{theo}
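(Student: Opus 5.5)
The plan is to reduce to the case where $A$ is a $p$-group, then combine Turull's coprime theorem with Lemma \ref{lem 1}.

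\emph{Case $\pi=\emptyset$.} Here $(|G|,|A|)=1$, and Turull's theorem quoted in the introduction gives $h(G)\leq 2\ell(A)+h(C_G(A))$. Since $h(H)\leq \ell(H)$ for any solvable $H$, and $\ell(C_G(A))=\mathbf{c}(G;A)$ by Proposition \ref{Prop 3}(2), the claim follows.

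\emph{Case $\pi=\{p\}$, reduction.} Write $A=A_p\times A_{p'}$. Because $\pi=\{p\}$, no prime of $A_{p'}$ divides $|G|$, so $A_{p'}$ acts coprimely on $G$. Turull's theorem then gives $h(G)\leq 2\ell(A_{p'})+h(C)$ with $C=C_G(A_{p'})$.

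The group $C$ is $A$-invariant and $A_p$ acts on it. Under the coprime action of $A_{p'}$, the $A$-invariant Hall $p'$-subgroup $G_{p'}$ yields an $A$-invariant Hall $p'$-subgroup $C\cap G_{p'}$ of $C$; I will check this via Property (4) and the conjugacy of Hall subgroups. Also $\mathbf{c}(C;A_p)=\mathbf{c}(C;A)\leq \mathbf{c}(G;A)$, where the equality holds because $A_{p'}$ acts trivially on $C$, and the inequality is Proposition \ref{Prop 2}(3). So it suffices to prove $h(C)\leq 2\ell(A_p)+\mathbf{c}(C;A_p)$. That is, we may assume $A$ is a $p$-group.

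\emph{$A$ a $p$-group.} Let $H=G_{p'}$, on which $A$ acts coprimely. Every $A$-composition factor of a $p$-group is fixed by the $p$-group $A$, so $\mathbf{c}(G_p;A)=\ell(G_p)$ (this is Proposition \ref{Prop 3}(1) with $A_{p'}=1$). Summing $\mathbf{c}$ over a series through which the $p$-chief factors and $p'$-chief factors can be separated, as in Propositions \ref{Prop 4} and \ref{Prop 5}, should give
\[
\mathbf{c}(G;A)=\ell(G_p)+\ell(C_H(A)).
\]
Turull applied to $H$ gives $h(H)\leq 2\ell(A)+\ell(C_H(A))$. It then remains to compare $h(G)$ with $h(H)$ and the $p$-structure, which is the substantial step.

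Using the upper $p$-series, I would aim to prove
\[
h(G)\leq h(H)+2\ell_p(G)-1,
\]
or a variant of it. The idea is that each $p$-factor $L_j/K_j$ and each adjacent $p'$-layer interleave, so that the $p'$-layers together contribute no more Fitting length than $H$ does, up to one extra Fitting layer per $p$-factor. Combined with $2\ell_p(G)-1\leq \ell(G_p)$ from Lemma \ref{lem 1}, this would give $h(G)\leq 2\ell(A)+\ell(C_H(A))+\ell(G_p)=2\ell(A)+\mathbf{c}(G;A)$.

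\emph{Main obstacle.} The hard step is the comparison between $h(G)$ and $h(H)$. The Fitting layers of $H$ need not align with the $p'$-sections $K_{j+1}/L_j$, so a naive bound adds $h$ of each $p'$-section separately, and that is far too large. What I would try first is induction on $|G|$, passing to $G/O_{p'}(G)$ and then to $G/O_{p',p}(G)$. At each step the goal is to show that $h$ drops by at most $2$ while $\mathbf{c}$ drops by at least the corresponding amount; that amount should come from $\ell$ of the bottom $p$-factor together with the $A$-fixed points in $O_{p'}(G)$.

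If the off-by-one accounting fails, the equality case of Lemma \ref{lem 1} (top $p$-factor of order $p$, and $m=1$ or $p=2=m$) is the place to recover the missing unit. Those configurations are small enough to check directly.
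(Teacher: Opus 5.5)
\textbf{There is a genuine gap at the step you call substantial.}

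Your preliminary reductions are sound:
\begin{itemize}
\item the coprime case via Turull and Proposition \ref{Prop 3}(2);
\item the passage to $C=C_G(A_{p'})$, where $C\cap G_{p'}$ is indeed an $A$-invariant Hall $p'$-subgroup of $C$ and $\mathbf{c}(C;A_p)=\mathbf{c}(C;A)\le \mathbf{c}(G;A)$;
\item the formula $\mathbf{c}(G;A)=\ell(G_p)+\ell(C_H(A))$ when $A$ is a $p$-group.
\end{itemize}

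The inequality $h(G)\le h(H)+2\ell_p(G)-1$ is false in general. Take $p=2$ and $G=ASL(2,3)$, the semidirect product of the natural module $V$ (elementary abelian of order $9$) by $SL(2,3)$.
\begin{itemize}
\item $H$ is a Sylow $3$-subgroup, so $h(H)=1$.
\item The upper $2$-series is $1<V<VQ_8<G$, so $\ell_2(G)=1$.
\item $F(G)=V$ and $F(G/V)=Q_8$, so $h(G)=3>2$.
\end{itemize}
The correct comparison, which the paper takes from Casolo (Lemma 4.1 applied to $G=PS$), is $h(G)\le h(S)+2\ell_p(G)$. It is sharp in this example.

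Your proposed layer-by-layer induction would not yield even this. Passing to $G/O_{p'}(G)$ can lower the height by as much as $h(O_{p'}(G))$. Paying for that with Turull costs a fresh $2\ell(A)$ at every $p'$-layer, whereas the target allows $2\ell(A)$ only once. You need a result like Casolo's, which bounds all $p'$-layers together by $h(S)$.

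\textbf{Even with the correct inequality, one unit is missing.} Turull and Lemma \ref{lem 1}, in the form $2\ell_p(G)\le \ell(P)+1$, give only $h(G)\le 2\ell(A)+\mathbf{c}(G;A)+1$. You rightly point to the equality case of Lemma \ref{lem 1}. However, the missing unit cannot be recovered by ``checking small configurations'': that equality case restricts $P$ but leaves $S$ arbitrary.

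The missing idea is the reduction to $G=[G,A]$, which the paper makes at the start of a minimal counterexample. If $[G,A]<G$, then $h(G)\le h([G,A])+\ell(G/[G,A])$, while $\mathbf{c}(G/[G,A];A)=\ell(G/[G,A])$; additivity of $\mathbf{c}$ then finishes.

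In the extremal case every estimate is an equality, so $2\ell_p(G)=\ell(P)+1$, and the top $p$-factor $L_m/K_m$ has order $p$. The argument then runs as follows.
\begin{itemize}
\item The $p$-group $A$ centralizes $L_m/K_m$, since it has order $p$.
\item The three subgroup lemma gives $[G,L_m]=[[G,A],L_m]\le K_m$.
\item Since $O_{p'}(G/K_m)=1$, the subgroup $L_m/K_m$ contains its centralizer in $G/K_m$, so $G=L_m$.
\item Then $G=[G,A]\le K_m<G$, a contradiction.
\end{itemize}

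In your set-up you would also have to secure $G=[G,A]$ after reducing to $A$ a $p$-group. One way is to run the whole argument as a minimal counterexample, as the paper does.
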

\begin{proof}
Let $(G,A)$ be a counterexample to the theorem with minimal $\left\vert
G\right\vert +\left\vert A\right\vert $. Suppose first that $[G,A]<G.$ Then we have $$h([G;A])\leq 2\ell(A)+\mathbf{c}([G;A];A) =2\ell(A)+\mathbf{c}(G;A) -\mathbf{c}(G/[G;A];A)$$ $$=2\ell(A)+\mathbf{c}(G;A)-\ell (G/[G;A])$$ and hence $$h(G)\leq h([G;A])+\ell(G/[G;A])\leq 2\ell(A)+\mathbf{c}(G;A)$$ which is impossible. It follows that $G=[G,A].$

If $\pi (G)\cap \pi (A)=\emptyset $
then by  \cite{Turull1984} we have $h(G)\leq 2\ell (A)+h(C_{G}(A)).$ But $%
\mathbf{c}(G;A)=\ell (C_{G}(A))\geq h(C_{G}(A))$ by Proposition \ref{Prop 3}.  Thus we can assume that $%
\pi (G)\cap \pi (A)=\{p\}.$ Let us write $A=A_{p}\times A_{p^{\prime }}$ and
observe that $A_{p^{\prime }}$ acts coprimely on $G.$ Thus we get again by %
\cite{Turull1984}  that $h(G)\leq 2\ell (A_{p^{\prime }})+h(C_{G}(A_{p^{\prime }})).
$ If $A_{p^{\prime }}\neq 1$ we see that $C_{G}(A_{p^{\prime }})<$ $G$. So $%
(C_{G}(A_{p^{\prime }}),A)$ is not a counterexample to the theorem. So we
get 
\begin{equation*}
h(C_{G}(A_{p^{\prime }}))\leq 2\ell (A/C_{A}(C_{G}(A_{p^{\prime }})))+%
\mathbf{c}(C_{G}(A_{p^{\prime }});A)
\end{equation*}
giving by Proposition \ref{Prop 2} that 
\begin{equation*}
h(G)\leq 2\ell (A_{p^{\prime }})+2\ell (A/C_{A}(C_{G}(A_{p^{\prime }})))+%
\mathbf{c}(C_{G}(A_{p^{\prime }});A)\leq 2\ell (A)+\mathbf{c}(G;A).
\end{equation*} 

Hence we may assume that $A$ is a group of order $p^{n}$ for some $n=\ell (A)$
where $p$ is a divisor of the order of $G$.

By assumption there exists a Hall $p^{\prime }$-subgroup  $S$ of $G$ which is $A$%
-invariant. Let $P\in Syl_{p}(G).$ By \cite{Casolo} Lemma 4.1
applied to the factorization $G=PS$  we see that 
\begin{equation*}
h(G)\leq h(S)+2\ell _{p}(G).
\end{equation*}
Thus we get  $2\ell (A)+\mathbf{c}(G;A)+1\leq h(G)\leq 2\ell _{p}(G)+h(S).$
On the other hand we know by \cite{Turull1984} $\ $that $h(S)\leq 2\ell
(A)+h(C_{S}(A))$. Therefore $$2\ell (A)+\mathbf{c}(G;A)+1\leq 2\ell
_{p}(G)+2\ell (A)+h(C_{S}(A))$$ and as $\mathbf{c}(G;A)=\ell (C_{S}(A))+\ell
(P)$ by Proposition \ref{Prop 5}(1) and\ $2\ell _{p}(G)\leq \ell (P)+1$ by Lemma \ref{lem 1} we get%
\begin{equation*}
c(G;A)+1\leq 2\ell _{p}(G)+h(C_{S}(A))\leq \ell (P)+1+\ell (C_{S}(A))=%
\mathbf{c}(G;A)+1.
\end{equation*}%
and hence $\ell (C_{S}(A))=h(C_{S}(A))$ and $2\ell _{p}(G)=\ell (P)+1$. Then $\ell _{p}(G)=1$ and $P$ and hence the top $p$-factor $L_{1}/K_{1}$
of the upper $p$-series of $G$ is cyclic of prime order.  So $[L_{1},A]\leq
K_{1}$ and hence by the three subgroup lemma $[[G,A],L_{1}]\leq K_{1}$ that
is $G=[G,A]\leq L_{1}.$ But if $G=L_{1},$ then we have $[G,A]\leq K_{1}$
giving the contradiction $G=[G,A]\leq K_{1}<G.$
\end{proof}

An argument similar to the one in Theorem \ref{theo 3} gives a slightly better bound in the special case that $A$ acts with regular orbits:

\begin{theo} \label{theo 6} Let $G$ be a solvable group and $A$ be a group of prime power of order $p^n$ acting by automorphisms and with regular orbits on $G$. If $G$ contains an $A$-invariant Hall $p'$-subgroup of $G$ then $$h(G)\leq \ell(A)+\mathbf{c}(G;A).$$
\end{theo}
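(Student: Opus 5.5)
We mimic the proof of Theorem \ref{theo 3}, replacing Turull's bound $h(S)\leq 2\ell(A)+h(C_S(A))$ for the coprime part by his regular-orbit bound $h(S)\leq \ell(A)+\ell(C_S(A))$. Take a counterexample $(G,A)$ with $|G|+|A|$ minimal.

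First we reduce to $G=[G,A]$. If $[G,A]<G$, minimality applies to $([G,A],A)$; regular orbits pass to $A$-invariant subgroups, and $[G,A]\cap S$ is an $A$-invariant Hall $p'$-subgroup of $[G,A]$. Since $A$ acts trivially on $G/[G,A]$, Proposition \ref{Prop 2} gives $\mathbf{c}(G;A)=\mathbf{c}([G,A];A)+\ell(G/[G,A])$. Using $h(G)\leq h([G,A])+\ell(G/[G,A])$ we get the bound for $G$, a contradiction.

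If $p\nmid |G|$, the action is coprime, so by Turull's regular-orbit theorem and Proposition \ref{Prop 3}(2),
\[
h(G)\leq \ell(A)+\ell(C_G(A))=\ell(A)+\mathbf{c}(G;A).
\]
So assume $p\in\pi(G)$. Let $S$ be the given $A$-invariant Hall $p'$-subgroup and $P\in Syl_p(G)$. Applying \cite{Casolo} Lemma 4.1 to $G=PS$ gives $h(G)\leq h(S)+2\ell_p(G)$. The action of $A$ on $S$ is coprime with regular orbits, so $h(S)\leq \ell(A)+\ell(C_S(A))$.

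Next we compute $\mathbf{c}(G;A)$. We may not know that $A$ normalizes a Sylow $p$-subgroup, but we can avoid Proposition \ref{Prop 5}. Since $A$ is a $p$-group, every $A$-composition factor that is a $p$-group is centralized by $A$. A $p'$-factor contributes exactly when it lies in $C_S(A)$ through the covering of factors by $S$. Hence $\mathbf{c}(G;A)=\ell(P)+\ell(C_S(A))$, which is what Proposition \ref{Prop 5}(1) gives, so Proposition \ref{Prop 4}/\ref{Prop 5} should apply essentially as used in Theorem \ref{theo 3}. Combining this with Lemma \ref{lem 1}, $2\ell_p(G)\leq \ell(P)+1$, we obtain
\[
\ell(A)+\mathbf{c}(G;A)+1\leq h(G)\leq \ell(A)+\ell(C_S(A))+\ell(P)+1=\ell(A)+\mathbf{c}(G;A)+1.
\]
So equality holds throughout, and in particular $2\ell_p(G)=\ell(P)+1$.

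The equality case of Lemma \ref{lem 1} gives either $m=1$ with $|P|=p$, or $p=2=m$, $G=L_2$ and $\ell(P)=3$. But $2\cdot 2=4\neq 3+1$ fails only in appearance: in the second case $\ell_p(G)(\ell_p(G)+1)/2=3=\ell(P)$ while $2\ell_p(G)-1=3<4$, so $2\ell_p(G)=\ell(P)+1$ forces $\ell_p(G)=1$ and $|P|=p$. Then the top $p$-factor $L_1/K_1$ has order $p$, so $[L_1,A]\leq K_1$. The three subgroup lemma gives $G=[G,A]\leq L_1$, and then $G=[G,A]\leq K_1<G$, a contradiction.

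The main obstacle is the identification $\mathbf{c}(G;A)=\ell(P)+\ell(C_S(A))$ when $A$ need not normalize a Sylow $p$-subgroup. If that identification fails, we would argue directly via $A$-composition series through the normal series of $G$, using that $p$-factors are always centralized by the $p$-group $A$.
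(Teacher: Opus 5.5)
Your outline is the paper's proof: Casolo's Lemma 4.1 for $G=PS$, Turull's regular-orbit bound $h(S)\le \ell(A)+\ell(C_S(A))$, the identity $\mathbf{c}(G;A)=\ell(P)+\ell(C_S(A))$, Lemma \ref{lem 1} to force $\ell(P)=2\ell_p(G)-1$, and a top $p$-factor of order $p$ centralized by $A$ contradicting $G=[G,A]$. You also make the reduction to $G=[G,A]$ explicit, which the paper only invokes through ``as in the proof of Theorem \ref{theo 3}''.

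Your worry about $A$-invariant Sylow $p$-subgroups is unnecessary. Since $A$ is a $p$-group, it lies in a Sylow $p$-subgroup $T$ of the semidirect product $GA$, so $T\cap G$ is an $A$-invariant Sylow $p$-subgroup of $G$. The coprime action on $S$ supplies $A$-invariant Sylow $q$-subgroups for $q\neq p$, so Proposition \ref{Prop 5}(1) applies directly. Your direct count also works: intersect an $A$-composition series with $S$, and note that every $p$-factor has order $p$ and is centralized by $A$.

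The one incorrect step is your dismissal of the case $\ell_p(G)=2$. From $2\ell_p(G)=\ell(P)+1$ and $\ell_p(G)(\ell_p(G)+1)\le 2\ell(P)$ you only get $(\ell_p(G)-1)(\ell_p(G)-2)\le 0$, i.e.\ $\ell_p(G)\in\{1,2\}$. The pair $\ell_p(G)=2$, $\ell(P)=3$ (with $p=2$) does satisfy $2\ell_p(G)=4=\ell(P)+1$; your own computation $2\ell_p(G)-1=3=\ell(P)$ shows this. So the equality does not force $\ell_p(G)=1$, and in that case $L_1/K_1$ is not the top $p$-factor.

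The repair is what the paper does. In both cases the equality clause of Lemma \ref{lem 1} gives that the top $p$-factor $L_m/K_m$ has order $p$, hence $[L_m,A]\le K_m$. The three subgroup lemma together with $C_{G/K_m}(L_m/K_m)\le L_m/K_m$ yields $G=[G,A]\le L_m$, and then $G=[G,A]\le K_m<G$. With this change your proof is complete.
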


\begin{proof}
    Let $(G,A)$ constitute a counterexample to the theorem with minimal $|G|+|A|$. Let $S$ be an $A$-invariant Hall $p'$-subgroup of $G.$ Then we have 
    $$h(G)\leq h(S)+2\ell _{p}(G)$$
    and by Theorem A in \cite{Turullgac} $$
    h(S)\leq \ell (A)+\ell(C_{S}(A)).
   $$
    If we have $2\ell _{p}(G)\leq \ell (G_{p})$ then the above observations imply by Lemma \ref{lem 1} that $$h(G)\leq h(S)+2\ell _{p}(G)\leq \ell (A)+\ell(C_{S}(A))+\ell (G_{p})=\ell (A)+\mathbf{c}(G;A).$$ Thus we have $\ell (G_{p}) < 2\ell _{p}(G)$.
   But by Lemma \ref{lem 1}  we also know that $$\ell _{p}(G)(\ell _{p}(G)+1)\leq 2\ell (G_{p})< 4\ell _{p}(G)$$ giving  $\ell _{p}(G) < 3$. So we have either $\ell _{p}(G) = 2$ and $ \ell (G_{p})\leq 3$ or $\ell _{p}(G) = 1= \ell (G_{p})$. In both cases top $p$-factor of $G$ is of prime order and hence is centralized by $A.$ This gives a contradiction as in the proof of Theorem \ref{theo 3}.
\end{proof}

\begin{theo} \label {theo 7} Let $G$ be a solvable group and $A$ be a nilpotent group acting with regular orbits on $G$. Suppose that $G$ has an $A$-invariant Hall $\pi'$-subgroup and that Hall $\pi$-subgroups of $G$ are nilpotent then $$h(G)\leq \ell(A)+\mathbf{c}(G_{{\pi'}};A)+2d(G_{\pi})=\ell(A)+\ell(C_{G_{{\pi'}}}(A))+2d(G_{\pi}).$$ In particular, we have $h(G)\leq \ell(A)+\mathbf{c}(G_{{\pi'}};A)+2$ when Hall $\pi$-subgroups of $G$ are abelian.
    
\end{theo}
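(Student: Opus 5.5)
Mirroring the proofs of Theorems \ref{theo 3} and \ref{theo 6}, we split $G$ along a factorization $G=G_{\pi}S$. Here $S=G_{\pi'}$ is the given $A$-invariant Hall $\pi'$-subgroup and $G_\pi$ is a Hall $\pi$-subgroup, which by hypothesis is nilpotent. Since $G_\pi$ is nilpotent, $G_\pi=\prod_{p\in\pi}P_p$, where $P_p$ is a Sylow $p$-subgroup. The natural replacement for the $2\ell_p(G)$ term of \cite{Casolo} Lemma 4.1 is a bound of the form
\[
h(G)\leq h(S)+2d(G_\pi),
\]
where $d(G_\pi)$ denotes the derived length of $G_\pi$.

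The first step is to establish this inequality. I would argue by induction on $d(G_\pi)$. Let $D$ be the last nontrivial term of the derived series of $G_\pi$, so $D$ is abelian. Since Hall $\pi$-subgroups are conjugate, the image of $D$ in $G/F(G)$ behaves uniformly. The cleanest approach is to use the Hall--Higman type fact that a nilpotent Hall $\pi$-subgroup of $G/F(G)$ satisfies $G_\pi\cap F_2(G)\ldots$. Rather than pursue that, I would apply \cite{Casolo} Lemma 4.1 in the version for factorizations $G=HK$ with $H$ nilpotent. As I recall, that lemma bounds $h(G)$ by $h(K)+2$ times the ``length'' of $H$ measured by its derived length (the abelian case gives $h(G)\le h(K)+2$). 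Granting this, we obtain $h(G)\le h(S)+2d(G_\pi)$. Obtaining this step, and in particular showing that the nilpotent case reduces to the abelian case with exactly a factor $2$ per derived step, is the main obstacle. What I would try first is to pass to $G/O_{\pi'}(G)$, where $F(G)$ is a $\pi$-group contained in $G_\pi$. Then, modulo $F(G)$, the derived length of the image of $G_\pi$ drops by at least one, because an abelian normal-section argument puts the last derived term of $G_\pi$ into $F_2$. Each such step costs at most $2$ in Fitting height.

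The second step bounds $h(S)$. The action of $A$ on $S$ is coprime, since $\pi(S)\cap\pi(A)=\emptyset$, and $A$ acts with regular orbits on $S$ because every $A$-invariant section of $S$ is a section of $G$. Hence Theorem A of \cite{Turullgac} gives $h(S)\leq \ell(A)+\ell(C_S(A))$. By Proposition \ref{Prop 3}(2), $\ell(C_S(A))=\mathbf{c}(S;A)=\mathbf{c}(G_{\pi'};A)$, which is exactly the asserted equality between the two right-hand sides.

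Combining the two steps gives $h(G)\le \ell(A)+\mathbf{c}(G_{\pi'};A)+2d(G_\pi)$. When Hall $\pi$-subgroups are abelian we have $d(G_\pi)\le 1$, which gives the final assertion. Note that no minimal counterexample is needed here, unlike in Theorem \ref{theo 6}, because the $\pi$-part is charged through $d(G_\pi)$ rather than through $\mathbf{c}$.
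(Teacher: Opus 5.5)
Your overall architecture matches the paper's: bound $h(G)$ by $h(G_{\pi'})+2d(G_\pi)$, then apply Theorem A of \cite{Turullgac} to the coprime, regular-orbit action of $A$ on $S=G_{\pi'}$, using Proposition~\ref{Prop 3}(2) to identify $\ell(C_S(A))$ with $\mathbf{c}(S;A)$. Your second step is exactly right.

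\textbf{The gap is the first step.} You grant it rather than prove it, and what you grant is not the form of Lemma 4.1 of \cite{Casolo}. That lemma is stated in terms of $\pi$-length:
\[ h(G)\le h(G_\pi)+h(G_{\pi'})+2\ell_\pi(G)-1. \]
Since $G_\pi$ is nilpotent, this gives $h(G)\le h(G_{\pi'})+2\ell_\pi(G)$. Passing from $\ell_\pi(G)$ to $d(G_\pi)$ is a separate, Hall--Higman-type ingredient: Theorem 2.3 of \cite{Casolo}, which gives $\ell_\pi(G)\le d(G_\pi)$ when Hall $\pi$-subgroups are nilpotent. The missing idea is to route the argument through $\ell_\pi(G)$ and combine these two results. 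That is the paper's entire proof of the first step.

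\textbf{Your fallback induction does not close the gap.} First, consider the claim that, when $O_{\pi'}(G)=1$, the last nontrivial derived term of $G_\pi$ lies in $F(G)=O_\pi(G)$, so that $d$ drops modulo $O_{\pi',\pi}(G)$. This is not an ``abelian normal-section argument''. It is precisely the deep content of $\ell_\pi(G)\le d(G_\pi)$. Already for $\pi=\{p\}$ it amounts to the theorem $\ell_p(G)\le d(P)$ of Hall--Higman, Berger--Gross and Bryukhanova.

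Second, ``each such step costs at most $2$'' is unjustified and fails under naive additivity. Factoring out $O_{\pi'}(G)$ already costs up to $h(S)$, since $O_{\pi'}(G)\le S$. Every later $\pi'$-layer of the upper $\pi$-series is again a section of $S$, with Fitting height up to $h(S)$. Layer-by-layer estimates therefore give only something like $(\ell_\pi(G)+1)h(S)+\ell_\pi(G)$, not $h(S)+2\ell_\pi(G)$. Preventing the $\pi'$-layers from accumulating is exactly the nontrivial content of Casolo's factorization lemma. It has to be invoked (or reproved), not replaced by this induction.
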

\begin{proof} By Lemma 4.1 in \cite{Casolo}, $h(G)\leq h(G_{\pi})+ h(G_{{\pi'}}) +2\ell_{\pi}(G)-1.$ On the other hand Theorem 2.3 in \cite{Casolo} gives that $\ell_{\pi}(G)\leq d(G_{\pi})$. Then we get $$h(G)\leq  h(G_{{\pi'}}) +2d(G_{\pi}).$$ Theorem A in \cite{Turullgac} applied to the coprime action of $A$ on the group $G_{\pi'}$ yields that $h(G_{{\pi'}})\leq \ell(A)+\mathbf{c}(G_{{\pi'}};A).$ Thus we have $$h(G)\leq  \ell(A) +\mathbf{c}(G_{{\pi'}};A)+2d(G_{\pi}).$$ 
\end{proof}
\section{Fixed point free case}

Let $A\leq Aut(G)$. Notice that $\mathbf{c}(G;A)=0$ if and only if $C_{G}(A)=1.$ In this section we study the fixed point free action of a nilpotent group $A$ under the additional condition that Hall $\pi$-subgroups of $G$ are nilpotent as a generalization of the case where $|\pi|\leq 1$.

 \begin{definition}An $A$-tower of length $t$ in $G$ is a sequence of $A$-invariant subgroups $S_i, \, i=1,\ldots ,t,$ of $G$  such that the following hold:

		(\emph{a}) $S_i$ is a nontrivial $p_i$-group for some prime $p_i$ for $i = 1, \ldots, t$,
        
        (\emph{b}) $S_i$ normalizes $S_j$ for $i<j$, 
  
		(\emph{c}) Set $P_t=S_t$ and $P_i=S_i/C_{S_i}(P_{i+1})$ for $i = 1, \ldots, t-1$. $P_i$ is nontrivial for $i = 1, \ldots, t$, 
        
        (\emph{d}) $p_i \neq p_{i+1}$, for $i = 1, \ldots, h-1$.
        \end{definition}
\begin{lem}	Let $A$ act on the solvable group $G$. Suppose that every $A$-invariant subgroup of $G$ has an $A$-invariant Sylow subgroup for any prime dividing its order. Let $N$ be an $A$-invariant normal subgroup of $G$ and let $\overline{S_t},\ldots ,\overline{S_1}$ be an $A$-tower of $G/N$. Then 
there is an $A$-tower $S_t,\ldots ,S_1$ of $G$ which maps to $\overline{S_i}$ for $i =1,\ldots t$. 
\end{lem}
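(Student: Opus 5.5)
We lift the tower from the top down, i.e. we construct $S_t$ first and then $S_{t-1},\dots,S_1$ in turn. This is the standard lifting argument for towers in the coprime case, with the existence of $A$-invariant Sylow subgroups (assumed here for every $A$-invariant subgroup) standing in for Property (4).

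\textbf{Choosing $S_t$.} Write $\overline{S_i}=X_i/N$ with $X_i$ the full preimage, so $X_i$ is $A$-invariant. By hypothesis $X_t$ has an $A$-invariant Sylow $p_t$-subgroup $S_t$. Since $\overline{S_t}$ is a $p_t$-group, $S_tN/N$ is a Sylow $p_t$-subgroup of $X_t/N=\overline{S_t}$, so $S_tN/N=\overline{S_t}$.

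\textbf{Choosing $S_{i}$ from $S_{i+1},\dots,S_t$.} Suppose $S_{i+1},\dots,S_t$ are already chosen: they are $A$-invariant, satisfy (b), and map onto $\overline{S_{i+1}},\dots,\overline{S_t}$. Consider
\[
Y_i=X_i\cap N_G(S_{i+1})\cap\cdots\cap N_G(S_t),
\]
which is $A$-invariant. We claim $Y_iN\supseteq X_i$.

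To see this, set $M_j=S_jN$, the full preimage of $\overline{S_j}$. First, $X_i$ normalizes $M_{i+1}$ because $\overline{S_i}$ normalizes $\overline{S_{i+1}}$. Since $S_{i+1}$ is a Sylow subgroup of $M_{i+1}$, the Frattini argument gives $X_i\le N_{X_i}(S_{i+1})M_{i+1}$, that is, $X_i=N_{X_i}(S_{i+1})S_{i+1}N$.

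The main obstacle is to normalize all of $S_{i+1},\dots,S_t$ simultaneously, and to do so inside a subgroup mapping onto $\overline{S_i}$. We handle this by induction, descending through the already chosen groups and applying the Frattini argument to the subgroups $S_{i+1}\cdots S_tN$. The order of choices that makes this work is the following. When $S_{j}$ is chosen, choose it inside $N_{X_j}(S_{j+1})\cap\cdots\cap N_{X_j}(S_t)$, whose image is already all of $\overline{S_j}$ by the same argument. The product $T_j=S_j S_{j+1}\cdots S_t$ is then a subgroup, and $S_j$ is a Sylow $p_j$-subgroup of its normalizing part. A Frattini argument in $T_{i+1}N$ then shows $X_i\le N_{X_i}(S_{i+1},\dots,S_t)\,T_{i+1}N$. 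Using (b) in $G/N$, we absorb $T_{i+1}$: in the image, $\overline{S_i}$ only needs to be covered modulo $N$, and since $p_i\neq p_{i+1}$ we take a Sylow $p_i$-subgroup. So let $S_i$ be an $A$-invariant Sylow $p_i$-subgroup of $Y_i$; its image is a Sylow $p_i$-subgroup of $Y_iN/N\supseteq\overline{S_i}$. Replacing $Y_i$ by $Y_i\cap X_i$ and intersecting with the preimage, we get $S_iN/N=\overline{S_i}$. If this absorption fails, the fallback is to first replace the whole tower in $G/N$ by the subgroup $\overline{S_1}\cdots\overline{S_t}$ and pass to its preimage.

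\textbf{Checking (c).} Condition (b) holds by construction, and (a) and (d) are inherited. For (c), $S_t\neq1$ since its image is nontrivial. Now suppose $S_i$ centralized $P_{i+1}$ modulo $C_{S_i}(P_{i+1})$, i.e. $P_i$ is trivial. The images in $G/N$ form an action chain: $S_i$ maps onto $\overline{S_i}$, $S_{i+1}$ maps onto $\overline{S_{i+1}}$, and centralizing is preserved under homomorphic images, so $C_{S_i}(P_{i+1})$ maps into $C_{\overline{S_i}}(\overline{P_{i+1}})$. Proceeding inductively from $t$ downward, $P_i$ maps onto a group that surjects onto $\overline{P_i}$, which is nontrivial; hence $P_i\neq1$. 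This completes the lifting.
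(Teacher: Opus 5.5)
\textbf{There is a genuine gap.} Your top-down plan, taking $S_i$ to be an $A$-invariant Sylow $p_i$-subgroup of $Y_i=X_i\cap N_G(S_{i+1})\cap\cdots\cap N_G(S_t)$, is sound. Your check of (c), via compatible surjections $P_i\to\overline{P_i}$, is also fine. But the step that carries the content of the lemma is the covering claim $Y_iN=X_i$, and you do not prove it.

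(i) You assert that $S_{i+1}$ is a Sylow subgroup of $M_{i+1}=S_{i+1}N$. This is guaranteed only for $i+1=t$. Lower members are Sylow in $Y_{i+1}$, not necessarily in their $N$-preimage. For example, take $G=S_4$, $N=V_4$, $A=1$ and the tower $C_3,C_2$ of $S_3$. Every lift $S_1$ normalizing a Sylow $3$-subgroup $S_2$ of $A_4$ has order $2$ (as $N_{S_4}(S_2)\cong S_3$), whereas $S_1N\cong D_8$. So your Frattini step $X_i=N_{X_i}(S_{i+1})S_{i+1}N$ is unsupported.

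(ii) The ``Frattini argument in $T_{i+1}N$'' for the tuple $(S_{i+1},\dots,S_t)$ would need every $X_i$-conjugate of the tuple to be conjugate to it by a single element of $T_{i+1}N$. You never show this.

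(iii) Even granting $X_i\le N_{X_i}(S_{i+1},\dots,S_t)\,T_{i+1}N$, the factor $T_{i+1}$ cannot be absorbed, because it does not normalize $S_{i+1}$ in general ($S_{i+2}$ need not). Your ``if this absorption fails'' fallback concedes this, and it is not an argument.

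\textbf{What is missing} is a mechanism for normalizing $S_{i+1},\dots,S_t$ simultaneously.

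The paper gets this by recursion. With $S_t$ Sylow in $X_t$, the Frattini argument gives $X_i\le N_G(S_t)X_t=N_G(S_t)N$ for every $i<t$; here $S_t$ is absorbed because $S_t\le N_G(S_t)$. Hence $\overline{S_{t-1}},\dots,\overline{S_1}$ is an $A$-tower of $N_G(S_t)N/N\cong N_G(S_t)/N_N(S_t)$. Induction on $t$, applied to the $A$-invariant group $N_G(S_t)$, lifts it inside $N_G(S_t)$, so normalizing $S_t$ is then automatic.

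Alternatively, you can repair your iterative scheme directly. Set $Z_{t+1}=X_i$ and $Z_k=X_i\cap N_G(S_k)\cap\cdots\cap N_G(S_t)$, and show $Z_{k+1}\le Z_kN$ for $k=t,\dots,i+1$:
\begin{itemize}
\item $Z_{k+1}$ normalizes $Y_k$, in which $S_k$ genuinely is Sylow, so the Frattini argument gives $Z_{k+1}\le N_G(S_k)Y_k$.
\item Since $Y_k/(Y_k\cap N)$ is a $p_k$-group, $Y_k=S_k(Y_k\cap N)$, hence $Z_{k+1}\le N_G(S_k)(Y_k\cap N)$.
\item Because $Y_k\cap N\le N\cap N_G(S_{k+1})\cap\cdots\cap N_G(S_t)$, writing $z=nu$ with $n\in N_G(S_k)$ and $u\in Y_k\cap N$ forces $n\in Z_k$.
\end{itemize}
Chaining these inclusions gives $Y_iN=Z_{i+1}N=X_i$, which is the claim you need.
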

\begin{proof}  Let $G$ be a counterexample with $|G|+t$ minimal. 
Let $\overline{S_i}=H_i/N$. By hypothesis we can choose an $A$-invariant Sylow $p_t$-subgroup $S_t$ of $H_t$. Notice that for each $i=1,\ldots ,t$, $$\overline{S_i}=N_{\overline{S_i}}(\overline{S_t})=N_{H_i}(H_t)N/N=N_{H_i}(S_t)N/N$$ by the Frattini argument which yields that  $H_i=N_{H_i}(S_t)N.$  
Clearly, the sequence $$N_{H_{t-1}}(S_{t})N/N,\ldots , N_{H_1}(S_{t})N/N$$ forms an $A$-tower of $N_{G}(S_t)N/N.$ Then by induction it extends to an $A$-tower of $N_G(S_t)$ and hence an $A$-tower of $G$ of length $t-1$. Adjoining $S_t$ to the resulting tower we obtain an $A$-tower of $G$ of length $t$, which is impossible. Hence the claim is established. 
\end{proof}
\begin{lem} Let $A$ act on the solvable group $G$. Suppose that every $A$-invariant subgroup of $G$ has an $A$-invariant Sylow subgroup for any prime dividing its order. Then $G$ contains an $A$-tower of length $h=h(G).$
\end{lem}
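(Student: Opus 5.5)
I will argue by induction on $h=h(G)$, using the preceding lemma to lift towers from a suitable quotient. The case $h\le 1$ is trivial: if $G\neq 1$, any $A$-invariant Sylow subgroup of $G$ for a prime dividing $|G|$ is a tower of length $1$. For the inductive step I take $N=F(G)$, the Fitting subgroup, which is characteristic and hence $A$-invariant. Since $h(G/N)=h-1$ and the hypothesis on $A$-invariant Sylow subgroups passes to quotients, induction gives an $A$-tower $\overline{S_1},\dots,\overline{S_{h-1}}$ of $G/N$. The hypothesis passes to quotients because any $A$-invariant subgroup $H/N$ of $G/N$ has an $A$-invariant Sylow subgroup $T$ of $H$, and $TN/N$ is an $A$-invariant Sylow subgroup of $H/N$. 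By the preceding lemma this tower lifts to an $A$-tower $S_1,\dots,S_{h-1}$ of $G$ with $S_iN/N=\overline{S_i}$.

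The real work is to append one more term, which I will make the new bottom member $S_h$ (so that $S_{h-1}$ must act nontrivially on it). I have two options. The first is to strengthen the induction: prove that $G$ has an $A$-tower of length $h$ whose last term $S_h$ lies in $F(G)$ and whose penultimate term maps into the top of the tower of $G/F(G)$. The second, more direct, option is to pick the last term $\overline{S_{h-1}}$ of the quotient tower and choose its $p_{h-1}$-part appropriately.

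Concretely, I will first arrange that the quotient tower's last member $\overline{S_{h-1}}$ lies in $F(G/N)$. This is part of the strengthened inductive statement: the last member of the tower lies in the Fitting subgroup. Then the lifted $S_{h-1}$, which I may choose to be a $p_{h-1}$-group, lies in $F_2(G)$ but not in $N=F(G)$. Since $C_G(F(G))\le F(G)$ in a solvable group, the $p_{h-1}$-group $S_{h-1}$ does not centralize $O_{q}(G)$ for some prime $q$. Moreover $q\neq p_{h-1}$: otherwise $S_{h-1}O_{p_{h-1}}(G)$ would be a normal-in-$F_2$ piece contradicting $S_{h-1}\not\le F(G)$, after replacing $S_{h-1}$ by its image modulo the $p_{h-1}$-part. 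I then set $S_h$ to be an $A$-invariant Sylow $q$-subgroup of $O_q(G)$, which is $O_q(G)$ itself. Condition (b) holds because $O_q(G)$ is normal. Condition (d) holds because $q\neq p_{h-1}$.

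The main obstacle is condition (c). I need $P_h=S_h$, and then each $P_i=S_i/C_{S_i}(P_{i+1})$ must remain nontrivial. These quotients are computed inside the lifted tower rather than in $G/N$, so I must check that the nontriviality of $\overline{S_i}/C_{\overline{S_i}}(\overline{P_{i+1}})$ pulls back. Since $S_i$ maps onto $\overline{S_i}$ and $P_{i+1}$ maps onto $\overline P_{i+1}$, an element of $S_i$ acting nontrivially on $\overline P_{i+1}$ also acts nontrivially on $P_{i+1}$. So the chain is preserved for $i<h-1$, and $P_{h-1}\neq1$ by the choice of $q$. I expect the delicate point to be ensuring $P_{h-1}$ is computed relative to $S_h$ and not to the old quotient, together with keeping the strengthened invariant "last member in the Fitting subgroup" through the lifting lemma. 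If that invariant fails, I would instead build the tower top-down: start from $S_1$ in $G/F_{h-1}(G)$ and at each stage descend through $F_{k}(G)/F_{k-1}(G)$ using $C(F)\le F$ in the appropriate quotient.
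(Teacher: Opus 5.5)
\textbf{There is a genuine gap in your verification of condition (c).} Your overall strategy is the same as the paper's: pass to $G/F(G)$, lift the tower with the preceding lemma, and append some $O_q(G)$ at the bottom. The paper simply asserts that any $q$ with $[O_q(G),S_{h-1}]\neq 1$ works.

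The problem is in the pull-back. It needs an equivariant surjection $P_{i+1}\to\overline{P}_{i+1}$. At $i+1=h-1$ you have $P_{h-1}=S_{h-1}/C_{S_{h-1}}(O_q(G))$, while $\overline{P}_{h-1}=\overline{S_{h-1}}\cong S_{h-1}/(S_{h-1}\cap F(G))$. The natural map $S_{h-1}\to\overline{S_{h-1}}$ factors through $P_{h-1}$ only if $C_{S_{h-1}}(O_q(G))\le F(G)$. This fails in general, even with $q\neq p_{h-1}$.

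Here is a counterexample.
\begin{itemize}
\item Let $E=\langle a\rangle\times\langle b\rangle\cong C_3\times C_3$, and let $t$ be an involution centralizing $a$ and inverting $b$.
\item Let $V=\mathbb{F}_4$, with $a$ acting as multiplication by a primitive cube root of unity and $b,t$ acting trivially.
\item Let $W=\mathbb{F}_7^2$, with $a$ acting trivially, $b$ acting as $\mathrm{diag}(2,4)$, and $t$ swapping the coordinates.
\item Let $G$ be the semidirect product of $V\times W$ by $E\langle t\rangle$, and take $A=1$.
\end{itemize}
Then $F(G)=V\times W$ and $h(G)=3$. Since $C_{V\times W}(E)=1$, every lift of the quotient tower $\overline{E},\langle\overline{t}\rangle$ is conjugate to $S_2=E$, $S_1=\langle b^jt\rangle$. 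The prime $q=2$ satisfies $[O_2(G),E]\neq 1$ and $q\neq 3$. However, $P_2=E/C_E(V)=E/\langle b\rangle$ is centralized by $b^jt$, so $P_1=1$; only $q=7$ works. So $q$ must be chosen with the whole upper part of the tower in view.

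Separately, your argument for $q\neq p_{h-1}$ is muddled. $S_{h-1}$ can act nontrivially on $O_{p_{h-1}}(G)$; what is true is only that \emph{some} $q\neq p_{h-1}$ works.

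\textbf{The fix uses your strengthened hypothesis.}
\begin{itemize}
\item Let $p=p_{h-1}$. Then $S_{h-1}\le M$, where $M/F(G)=O_p(G/F(G))$.
\item Put $R=O_{p'}(F(G))$. Since $M/R$ is a $p$-group and $C_M(R)\cap R=Z(R)$ is central in $C_M(R)$, the normal subgroup $C_M(R)$ is nilpotent. Hence $C_M(R)\le F(G)$, and so $C_{S_{h-1}}(R)\le F(G)$.
\item With $R$ as bottom term, $S_{h-1}/C_{S_{h-1}}(R)$ therefore maps equivariantly onto $\overline{S_{h-1}}$. Your pull-back argument then gives $P_1^R\neq 1$.
\item Write $R=\prod_{q\neq p}O_q(G)$, and let $P_i^{(q)}$ denote the quotients built on $O_q(G)$. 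Downward induction gives $C_{S_i}(P^R_{i+1})=\bigcap_{q\neq p}C_{S_i}(P^{(q)}_{i+1})$. Hence $P_1^{(q)}\neq 1$ for some $q\neq p$.
\item Since $P_{i+1}=1$ forces $P_i=1$, all $P_i^{(q)}$ are nontrivial for this $q$.
\end{itemize}
Taking $S_h=O_q(G)$ then gives (a)--(d). It is characteristic, hence $A$-invariant, and it lies in $F(G)$, so the strengthened induction is preserved.
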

\begin{proof} Let $(G,A)$ be a counterexample with $|G|$ minimal. Set $\bar{G}=G/F(G)$. Set $h=h(G)$. By induction there is an $A$-tower $U_{h-1},\ldots , U_{1}$ of $\bar{G}$. By the above lemma there is an $A$-tower $S_{h-1},\ldots ,S_1$ of $G$ which maps to $U_{h-1},\ldots ,U_1$. Clearly there exists a prime $p$ dividing the order of $G$ such that $[O_p(G), S_{h-1}]\ne 1$. Set $S_{h}=O_p(G).$ Then the tower is $S_{h},S_{h-1},\ldots ,S_1$ is a desired one.
\end{proof}
\begin{theo}\label{theo 4.1} Let $G$ be a solvable group, and $A$ be a nilpotent subgroup of $Aut(G)$ acting fixed point freely on $G$. Assume that Hall $\pi$-subgroups of $G$ are nilpotent where $\pi=\pi(G)\cap \pi(A)$. Then $$h(G)\leq 2\ell (A)$$.
\end{theo}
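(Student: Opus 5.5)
We argue by induction on $|G|+|A|$ and work with $A$-towers, following the set-up built just before the statement. The first task is to check that fixed point freeness gives the hypothesis of the two tower lemmas: every $A$-invariant subgroup $H$ of $G$ has an $A$-invariant Sylow $p$-subgroup for each $p\in\pi(H)$.

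For primes $p\notin\pi$ this holds because the action on $H$ is coprime on the relevant part of $A$. More precisely, write $A=A_\pi\times A_{\pi'}$. Fixed point freeness forces $C_H(A)=1$. For a $\pi'$-part we use the standard fact that a nilpotent group acting fixed point freely on a solvable group stabilizes a Sylow system. I would derive this by induction on $|H|$: take an $A$-invariant minimal normal subgroup $M$ of $H$. Fixed point freeness passes to $H/M$, since $C_{H/M}(A)$ can be computed using the fact that $A$-orbits on the cosets of $M$ fix a coset only if they fix a point (a counting argument with $|C_M(A)|=1$ for nilpotent $A$, as in Dade's setting). So $H/M$ has $A$-invariant Sylow subgroups, and we lift them. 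Lifting a Sylow $q$-subgroup $Q/M$ with $M$ a $q'$-group uses the Frattini argument together with Glauberman-type arguments for the non-coprime prime. The assumption that Hall $\pi$-subgroups are nilpotent should let us handle $p\in\pi$ via the $A$-invariant Hall $\pi$-subgroup, whose Sylow subgroups are then characteristic in it.

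Once $A$-towers are available, the previous lemma gives an $A$-tower $S_h,\ldots,S_1$ of length $h=h(G)$. We then pass to the semidirect product $S_1S_2\cdots S_h$, which is an $A$-invariant subgroup on which $A$ still acts fixed point freely. We replace $S_i$ by the faithful sections $P_i$, using the standard reduction that makes each $P_i$ act faithfully on $P_{i+1}$.

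The main inequality is that any $A$-tower satisfies $t\le 2\ell(A)$ under fixed point free action. We build it in two stages.
\begin{enumerate}
\item For consecutive members with primes outside $\pi$, the action is coprime, and Turull's argument \cite{Turull1984} gives the bound $2\ell(A)$ on the corresponding tower length.
\item Primes in $\pi$ appear among the $p_i$. Because Hall $\pi$-subgroups are nilpotent, any two tower members whose primes lie in $\pi$ commute modulo centralizers. This means that in a tower the $\pi$-primes can never occur in consecutive positions, nor separated only by $\pi$-primes. So at most every other member has prime in $\pi$.
\end{enumerate}
We then delete the $\pi$-members by factoring, and use Proposition~\ref{Prop 5}(2) to pass to $C(A_p)$ on the remaining $p'$-parts with $A_{p'}$ acting. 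The goal is to show that each prime $p\in\pi$, together with $A_p$, can account for at most two tower levels per factor of $p$ in $|A|$. Concretely, we induct on $\ell(A)$: pick $Z\le Z(A)$ of prime order and show that the tower of $C_G(Z)$ under $A/Z$, or of $[G,Z]$, loses at most two levels.

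The main obstacle is this last step for $Z$ a $p$-group with $p\in\pi$. Non-coprime action destroys $G=[G,Z]C_G(Z)$, so I would first try to use the tower structure directly. A $p$-element $z\in Z$ acting on $P_{i+1}$, with $P_i$ a $p$-group (nilpotent Hall $\pi$-subgroup), behaves like an element of $P_i$. Replacing $P_i$ by $P_i\langle z\rangle$ then keeps a tower and mimics the analysis of Theorem~\ref{theo 3} and Lemma~\ref{lem 1} in the non-coprime case.
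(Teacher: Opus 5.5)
There is a genuine gap: the inequality that carries the proof is never established. Your last paragraph concedes that the case of a central $Z\le A$ of order $p\in\pi$ is open, and nothing earlier substitutes for it.

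Stage (1) only bounds each maximal run of consecutive $\pi'$-members. Such a run generates an $A$-invariant $\pi'$-subgroup, so Turull's theorem applies to it. But the runs are separated by $\pi$-members, so this gives no bound on the total length.

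``Deleting the $\pi$-members by factoring'' is not a legitimate operation on towers. Removing $S_{i+1}$ can destroy (c), since nothing forces $S_i$ to act nontrivially on $P_{i+2}$. It can also destroy (d), since a pattern $q,p,q$ collapses to $q,q$. And you have no control over how many levels are lost; in Theorem~\ref{theo 3} that cost is paid by the term $2\ell_p(G)$ in Casolo's inequality.

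Proposition~\ref{Prop 5}(2) is an identity between fixing sizes, both sides being $0$ here, and says nothing about Fitting heights. Your stage (2) observation is correct: $S_iS_{i+1}$ would be a $\pi$-group, hence nilpotent, forcing $P_i=1$. But it only shows that $\pi$-members are isolated. Adjoining an automorphism $z$ to $P_i$ yields no inequality.

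The paper makes no prime-by-prime count. In a minimal counterexample it proceeds as follows:
\begin{enumerate}
\item It cites Dade's Lemma 8.1 for the invariant Sylow subgroups. You should do the same rather than rely on your sketched induction.
\item It gets $G=\prod_{i=1}^{h}S_i$ from the tower lemmas and minimality.
\item It uses the nilpotency of $G_\pi$ to conclude that $\pi$ is a single prime $p$.
\item It applies Theorem~\ref{theo 3} with $\mathbf{c}(G;A)=0$, the invariant Hall $p'$-subgroup hypothesis being supplied by Dade's lemma.
\end{enumerate}

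In fact, once $\pi=\{p\}$ the bound is immediate. Since $A_{p'}$ acts coprimely, Turull gives $h(G)\le 2\ell(A_{p'})+h(C_G(A_{p'}))$. Next, $A_p$ acts fixed point freely on $C_G(A_{p'})$, because $C_{C_G(A_{p'})}(A_p)=C_G(A)=1$. This forces $p\nmid |C_G(A_{p'})|$, and Turull again gives $h(C_G(A_{p'}))\le 2\ell(A_p)$.

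So the whole difficulty lies in using the Hall $\pi$-hypothesis to eliminate all but one noncoprime prime. It does not lie in finding a noncoprime substitute for $G=[G,Z]C_G(Z)$, which is exactly what is unavailable. Be aware that the paper asserts this reduction to $|\pi|=1$ in a single sentence. If you follow its route, that is the step you must write out in full.
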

\begin{proof} Let $(G,A)$ constitute a counterexample to the theorem with minimal $|G|+|A|$. As the nilpotent group $A$ acts fixed point freely on $G$, every $A$-invariant subgroup of $G$ contains an $A$-invariant Sylow subgroup for each prime by Lemma 8.1 in \cite{Dade}. This leads to the existence of an $A$-tower $S_i, \, i=1,\ldots ,h=h(G),$ in $G$ by the above lemma. It follows by induction that $G=\Pi_{i=1}^h S_i.$ Since $G_{\pi}$ is nilpotent, the set $\pi$ must be a singleton, that is, there is only one prime dividing $(|G|,|A|)$. Hence the result follows by Theorem \ref{theo 3}. 
\end{proof}

\end{document}